\newtheorem{theorem}{Theorem}
\newtheorem{lemma}{Lemma}
\newtheorem{corollary}{Corollary}
\newtheorem{conjecture}{Conjecture}
\newcommand{\etal}{{et~al.}}
\newcommand{\ie}{{i.e.}}
\newcommand{\old}[1]{{}}
\newcommand{\conv}{{\rm conv}}
\newcommand{\per}{{\rm per}}
\newcommand{\len}{{\rm len}}
\newcommand{\RR}{\mathbb{R}}
\def\G{\mathcal G}
\def\eps{\varepsilon}
\providecommand{\intd}[0]%
{\;\mbox{d}}
\title{\Large THE OPAQUE SQUARE}
\author{Adrian Dumitrescu\thanks{Department of Computer Science,
University of Wisconsin--Milwaukee, USA\@.
Email:~\texttt{dumitres@uwm.edu}.
Supported in part by NSF grant DMS-1001667.}
\and
Minghui Jiang\footnote{%
Department of Computer Science,
Utah State University,
Logan, USA\@.
Email: \texttt{mjiang@cc.usu.edu}.}
}
\begin{document}

\maketitle

\begin{abstract}
The problem of finding small sets that block every line passing
through a unit square was first considered by Mazurkiewicz in 1916. 
We call such a set {\em opaque} or a {\em barrier} for the square. 
The shortest known barrier has length $\sqrt{2}+ \frac{\sqrt{6}}{2}=
2.6389\ldots$. The current best lower bound for the length of a 
(not necessarily connected) barrier is $2$, as established by Jones about
50~years ago.   
No better lower bound is known even if the barrier is restricted to 
lie in the square or in its close vicinity. 
Under a suitable locality assumption, we replace this lower bound by
$2+10^{-12}$, which represents the first, albeit small, step in a long
time toward finding the length of the shortest 
barrier. A sharper bound is obtained for interior barriers: 
the length of any interior barrier for the unit square is at least
$2 + 10^{-5}$.
Two of the key elements in our proofs are: 
(i) formulas established by Sylvester for the measure of all lines that meet
two disjoint planar convex bodies, and
(ii) a procedure for detecting lines
that are witness to the invalidity of a short bogus barrier for the square. 

\bigskip
\textbf{\small Keywords}: Opaque set,
opaque square problem, point goalie problem.

\end{abstract}


\section{Introduction} \label{sec:intro}

The problem of finding small sets that block every line passing
through a unit square was first considered by Mazurkiewicz in
1916~\cite{Ma16}; see also~~\cite{Ba59},~\cite{GM55}. 
Let $C$ be a convex body in the plane. Following Bagemihl~\cite{Ba59},
a set $\Gamma$ is an {\em opaque set} or a {\em barrier} for $C$, if $\Gamma$ 
meets all lines that intersect $C$. A barrier does not need to be
connected; it may consist of one or more rectifiable arcs and its 
parts may lie anywhere in the plane, including the exterior of $C$;
see~\cite{Ba59},~\cite{Br92}. 

{\em What is the length of the shortest barrier for a given convex
body $C$?} In spite of considerable efforts, the answer to this
question is not known even in the simplest instances,
such as when $C$ is a square, a disk, or an equilateral triangle;
see~\cite{Cr69},~\cite[Problem~A30]{CFG91},~\cite{E82},~\cite{FM86},~\cite{FMP84}, 
~\cite[Section~8.11]{F03},~\cite[Problem~12]{H78}.
Some entertaining variants of the problem appeared in different
forms in the literature~\cite{AG08,Br92,G90,J80,K86,K87}. 

A barrier blocks any line of sight across the region $C$ or detects
any ray that passes through it. Potential applications are in
guarding and surveillance~\cite{DO08}. Here we focus on the case
when $C$ is a square. The shortest barrier known for the unit square,
of length $2.639\ldots$, is illustrated in Figure~\ref{f1}~(right).
It is conjectured to be optimal. The current best lower bound, $2$,
has been established by Jones~\cite{J64} in 1964\footnote{A note of
  caution for the non-expert about the subtlety of the problem: 
  an arxiv submission~\cite{DP10} dated May 2010  
  claimed a first small improvement in the old lower bound of
  $2$ for a unit square, due to Jones~\cite{J64}, from 1964; its
  proof had a fatal error, and the submission was soon after withdrawn
  by the authors. Further, at least two 
  conference submissions by two other groups of authors were made
  in the last $3$ years claiming (erroneous) improvements in the same
  lower bound of $2$ for a unit square; both submissions were rejected
  at the respective conferences and the authors were notified
  of the errors discovered. In September 2013, yet another
  improvement in the lower bound of $2$ for a unit square has been
  announced~\cite{KMO13}. Its correctness however 
  remains unverified, since no proof seems to be publicly available at
  the time of this writing.}.
\begin{figure}[htbp]
\centerline{\epsfxsize=6.3in \epsffile{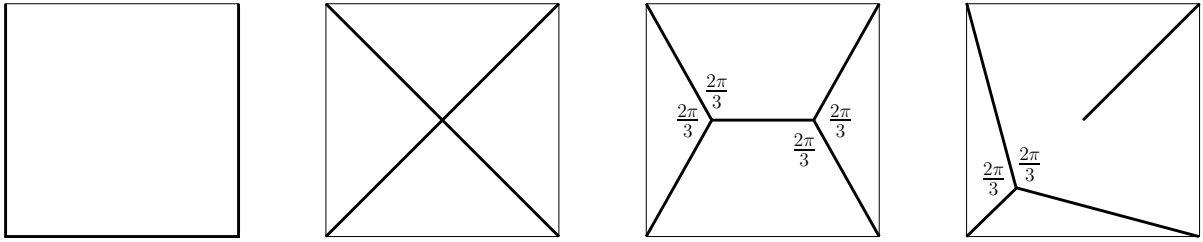}}
\caption{The first three from the left are barriers for the unit
square of lengths $3$, $2\sqrt{2}=2.8284\ldots$, and $1 +\sqrt{3}= 2.7320\ldots$.
Right: The diagonal segment $[(1/2,1/2),(1,1)]$ together with
  three segments connecting the corners $(0,1)$, $(0,0)$, $(1,0)$ to
  the point $(\frac{1}{2}-\frac{\sqrt3}{6},\frac{1}{2}-\frac{\sqrt3}{6})$
  yield a barrier of length $\sqrt2 + \frac{\sqrt6}{2}= 2.639\ldots$. }
\label{f1}
\end{figure}

The type of curve barriers considered may vary: the most restricted are
barriers made from single continuous arcs, then connected barriers,
and lastly, arbitrary (possibly disconnected) barriers. For the unit
square, the shortest known in these three categories have lengths $3$, 
$1 +\sqrt{3}= 2.7320\ldots$ and $\sqrt2 + \frac{\sqrt6}{2}=
2.6389\ldots$, respectively. They are depicted in Figure~\ref{f1}. 
Obviously, disconnected barriers offer the greatest freedom of design. 
For instance, Kawohl~\cite{K00} showed that the barrier in
Figure~\ref{f1}~(right) is optimal in the class of curves with at most
two components restricted to the square. For the unit disk, the
shortest known barrier consists of three arcs. See also~\cite{FM86,F03}.

Barriers can be also classified by where they can be located. In
certain instances, it might be infeasible to construct barriers guarding
a specific domain outside the domain, since that part might belong to
others. Following~\cite{DJP12} we call such barriers
constrained to the interior and the boundary of the domain,
\emph{interior}. For example, all four barriers for the unit square
illustrated in Figure~\ref{f1} are interior barriers. 
On the other hand, certain instances may prohibit barriers lying in
the interior of a domain.  We call a barrier constrained to the
exterior and the boundary of the domain, \emph{exterior}. 
For example, since the first barrier from the left in Figure~\ref{f1} is
contained in the boundary of the square, it is also an exterior barrier. 

\paragraph{Early algorithms and other related work.}
Two algorithms, proposed by Akman~\cite{A87} and respectively
Dublish~\cite{Du88} in the late 1980s, claiming to compute shortest
interior-restricted barriers, were refuted by Shermer~\cite{Sh91}
in the early 1990s. Shermer~\cite{Sh91} proposed a new algorithm instead,
which shared the same fate and was refuted recently by 
Provan~\etal~\cite{PBTW12}. 
As of today, no exact algorithm for computing a shortest 
(interior-restricted or unrestricted) barrier is known. 
Even though we have so little control on the shape or length of optimal
barriers, barriers whose lengths are somewhat longer can be computed
efficiently for any given convex polygon.  
Various approximation algorithms with a small constant ratio have been
obtained recently by Dumitrescu~\etal~\cite{DJP12}.   

If instead of curve barriers, we want to find {\em discrete} barriers
consisting of as few points as possible with 
the property that every line intersecting $C$ gets closer than
$\eps>0$ to at least one of them in some fixed norm, we arrive
at a problem raised by L\'aszl\'o Fejes T\'oth~\cite{FT73,FT74}
and subsequently studied by others~\cite{BF85,KW90,MP83,RS03,V94}. 
The problem of short barriers has attracted many other researchers and has
been studied at length; see also~\cite{Cr69,EP80,H78,M80}.

\paragraph{Our Results.}

In Section~\ref{sec:T1}, we prove:
\begin{theorem} \label{T1}
The length of any barrier for the unit square $U$
restricted to the square of side length $2$ concentric and homothetic
to $U$ is at least $2+10^{-12}$.
\end{theorem}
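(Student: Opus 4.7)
The plan is to combine Cauchy--Crofton integral geometry with Sylvester's formulas for pairs of convex bodies, as the abstract announces. Normalize the kinematic measure $d\mu$ on lines in $\mathbb{R}^2$ so that $\mu(\{\ell:\ell\cap K\ne\emptyset\})=\per(K)$ for every convex body $K$, and let $L(\Gamma)$ denote the length of the barrier. The Cauchy--Crofton identity
\[
L(\Gamma)\;=\;\tfrac{1}{2}\int n(\ell,\Gamma)\,d\mu(\ell),
\]
where $n(\ell,\Gamma)$ is the number of intersections counted with multiplicity, together with $\per(U)=4$ and the fact that every line meeting $U$ hits $\Gamma$ at least once, yields the sharpened bound
\[
L(\Gamma)\;\ge\;2\;+\;\tfrac{1}{2}\mu\bigl(\{\ell:\ell\cap U\ne\emptyset,\ n(\ell,\Gamma)\ge 2\}\bigr)\;+\;\tfrac{1}{2}\mu\bigl(\{\ell:\ell\cap U=\emptyset,\ \ell\cap\Gamma\ne\emptyset\}\bigr).
\]
So it suffices to lower-bound the sum of the two slack terms by $2\cdot 10^{-12}$ for every barrier $\Gamma\subseteq U'$, where $U'$ is the concentric $2\times 2$ square.

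I would then argue by contradiction. Suppose $L(\Gamma)<2+10^{-12}$, so each slack term is less than $2\cdot 10^{-12}$. This forces two structural features of $\Gamma$. First, any substantial arc of $\Gamma$ inside $U'\setminus U$ would be cut by a positive-measure family of lines missing $U$, blowing up the second slack term; hence $\Gamma$ must lie essentially inside $U$. Second, the projection identity $\int_0^\pi\!\int n_{\phi,t}(\Gamma)\,dt\,d\phi=2L(\Gamma)$, combined with $\pi_\phi(\Gamma)\supseteq\pi_\phi(U)$ and $\int_0^\pi w_U(\phi)\,d\phi=4$, forces $n_{\phi,t}(\Gamma)=1$ for a.e.~$t$ and a.e.~$\phi$, and $\pi_\phi(\Gamma)$ essentially equal to $\pi_\phi(U)$. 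A rectifiable curve that is monotone in every direction is contained in a single straight line, so $\Gamma$ must be Hausdorff-close to a line segment lying in $U$. But a single segment cannot block every line through $U$: lines parallel to it and meeting $U$ away from it are unblocked. Hence $\Gamma$ must still leave a positive-measure pencil of lines that it either misses or crosses at least twice.

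To quantify that pencil I would invoke Sylvester's formula for disjoint convex bodies $K_1,K_2$, which expresses $\mu(\{\ell:\ell\cap K_i\ne\emptyset,\ i=1,2\})$ in terms of $\per(\conv(K_1\cup K_2))$ and the length of the inner common-tangent envelope of $K_1,K_2$. I would place small convex probes---disks or short segments---at strategically chosen locations, for example near opposite sides of $U$ or partly inside $U'\setminus U$, and combine several of the resulting Sylvester measures via inclusion--exclusion against the total measure $\mu(\{\ell:\ell\cap U\ne\emptyset\})=4$. The output is an explicit pencil of lines of computable measure that any barrier fitting the near-segment description above must miss or doubly cross; the ``witness-line procedure'' mentioned in the abstract is the constructive search for an actual line in this pencil that certifies the failure of the barrier property.

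The hard part will be the final step: turning the qualitative ``$\Gamma$ is Hausdorff-close to a segment'' information into a sharp explicit lower bound on the slack terms via Sylvester measures. The probe sizes and positions must be balanced delicately against the very tight Hausdorff tolerance permitted by $L(\Gamma)-2<10^{-12}$, and the Sylvester bookkeeping has to yield a clean numerical surplus, which is where the tiny but positive constant $10^{-12}$ ultimately comes from. The more generous constant $10^{-5}$ available for interior barriers reflects the cleaner situation $\Gamma\subseteq U$, in which the second slack term above vanishes automatically and only the near-segment dichotomy needs to be exploited.
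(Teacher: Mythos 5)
Your Cauchy--Crofton setup is sound as far as it goes: the identity $L(\Gamma)=\tfrac12\int n(\ell,\Gamma)\,d\mu$ plus $\mu(\{\ell:\ell\cap U\neq\emptyset\})=4$ does give $L(\Gamma)\ge 2$ plus the two slack terms you write. But the structural step that is supposed to drive the contradiction is wrong, and this is a fatal gap. You argue that near-equality in the projection identity forces $\pi_\phi(\Gamma)\approx\pi_\phi(U)$ with essentially no overlap, hence $\Gamma$ is ``monotone in every direction'' and must be Hausdorff-close to a single line segment. This reasoning only applies to a \emph{connected} rectifiable curve. A barrier is allowed to be a finite (or countable) union of disjoint segments, and indeed the near-optimal configurations of length close to $2$ are nothing like a segment: the paper's Figure~2 exhibits a disconnected structure of length exactly $2$ (four axis-parallel segments hugging the four sides of $U$) that has zero slack against the four ``main'' directions. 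So the near-segment dichotomy you rely on simply does not hold, and the argument collapses before reaching the final quantitative step.

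The paper's actual route is quite different. After the same initial step (``if $L\le 2+\delta$, then the barrier is almost entirely near-horizontal and near-vertical, and $|X|,|Y|$ are each close to $1$''), the key structural result is not ``close to a segment'' but rather the opposite: Lemma~10 shows that a substantial fraction (at least $0.45$) of $X$ must lie in a thin strip along the top of $U$, another $0.45$ along the bottom, and similarly for $Y$ along the left and right. Sylvester's formula enters only in a limited technical role (Lemma~8, Corollary~1, Lemma~9), namely to prove that barrier material outside the slightly enlarged square $U_2$ is negligible. The actual contradiction is not obtained by a measure-theoretic inclusion--exclusion with probe bodies, but by the constructive sweep-line procedure ADVANCE: starting from a nearly vertical line at $x=w_1$ and allowing it to rotate (when it meets near-horizontal segments in the top/bottom strips) or translate (when it meets anything else), the potential argument in Lemma~11 shows the anchor points advance by strictly less than $1-2w_1$, so the sweep line gets stuck on an unblocked line. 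The locality hypothesis $\Gamma\subset U_3$ is precisely what keeps the sweep line from being charged twice by the same segment. Your proposal contains neither the ``four clusters near four sides'' structure nor any substitute for the sweep-line argument, and the erroneous near-segment claim means the plan cannot be repaired by simply tightening constants.
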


The possibility that parts of the barrier may be located outside of
the unit square $U$ only adds to the difficulty of obtaining a good
lower bound. Indeed, for the special case of barriers whose location
is restricted to $U$, the proof of the inequality in Theorem~\ref{T1} becomes
slightly easier. Moreover, a better lower bound can be obtained 
(along the same lines) in this case; however we omit this exercise. 
We then go one step further,
and by combining the methods developed in proving Theorem~\ref{T1}  
with the use of linear programming, in Section~\ref{sec:T2} we
establish a sharper bound: 
\begin{theorem} \label{T2}
The length of any interior barrier for the unit square is at least
$2 + 10^{-5}$.
\end{theorem}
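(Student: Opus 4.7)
My plan is to proceed by contradiction: assume $\Gamma\subset U$ is an interior barrier with $\len(\Gamma)<2+10^{-5}$ and derive a contradiction. The starting point is the classical projection bound of Jones: for each direction $\theta$, the projection of $\Gamma$ orthogonally to $\theta$ must cover the projection of $U$, so the Cauchy formula gives $\len(\Gamma)\geq \tfrac{1}{2}\int_0^\pi (|\cos\theta|+|\sin\theta|)\,d\theta=2$. The $10^{-5}$ slack must come from showing that, because $\Gamma$ is confined to $U$, a set of lines of positive kinematic measure is necessarily blocked at two or more points of $\Gamma$, forcing the projection of $\Gamma$ in certain direction ranges to exceed the width of $U$ by a quantifiable amount.

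Following the setup of Theorem~\ref{T1}, I would partition $U$ into a small family of convex cells $R_1,\ldots,R_k$ (for instance a regular grid refined near the corners, or cells dictated by the witness-line construction of Section~\ref{sec:T1}) and introduce the variables $\ell_i=\len(\Gamma\cap R_i)$. For each pair of disjoint cells $(R_i,R_j)$, Sylvester's formulas from Section~\ref{sec:T1} allow computing in closed form the kinematic measure of the set of lines that meet both $R_i$ and $R_j$ while still intersecting $U$. Any such line is blocked inside $R_i\cup R_j$ and therefore contributes twice to the appropriate Cauchy projection. Combining these pairwise measures with the per-cell Cauchy inequality produces a family of linear inequalities of the form $\sum_i a_{ij}\ell_i\geq b_j$, in which the constants $a_{ij}$ and $b_j$ are explicit rationals (or explicit expressions in $\sqrt{2}$ and $\arctan$) computed via Sylvester.

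The witness-line procedure imported from the proof of Theorem~\ref{T1} supplies an additional batch of inequalities: if $\ell_i$ falls below a certain threshold, then the pencil of lines that can only be blocked inside $R_i$ cannot be entirely covered by such a short arc, forcing extra length in neighbouring cells. Assembling all of these inequalities yields a linear program whose optimum $\min\sum_i \ell_i$ is to be bounded from below by $2+10^{-5}$. I would solve the LP numerically and then exhibit an explicit rational dual-feasible vector, so that LP duality certifies the bound as a finite sum of rigorously verifiable scalar inequalities, independently of the solver's floating-point arithmetic.

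The main obstacle, I expect, is calibrating the cell partition and the direction discretization: too coarse, and the Sylvester–Cauchy inequalities cannot distinguish a near-optimal interior barrier from the Jones extremum of length $2$; too fine, and the resulting LP is too large to admit a clean dual certificate. A second delicate point is handling \emph{degenerate} configurations where components of $\Gamma$ cluster along a side of $U$ or inside a small corner neighbourhood, since these are precisely the configurations where the naive projection bound is almost tight. As in Theorem~\ref{T1}, each such configuration will need to be treated by adapting the witness-line construction to rule out the existence of an escape line, which is what ultimately converts the interior restriction $\Gamma\subset U$ into the quantitative improvement of $10^{-5}$ over Jones' bound.
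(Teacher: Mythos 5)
Your overall architecture — partition $U$ into convex cells, introduce a length variable per cell, assemble linear constraints, solve an LP and certify the bound via a rational dual vector, with extra constraints coming from the witness-line (ADVANCE) procedure — does match the paper's strategy. However, the specific mechanism you propose for generating the constraints has a genuine gap, and you omit a key structural ingredient the paper relies on.

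First, the paper does \emph{not} use Sylvester's pairwise-cell kinematic measures anywhere in the proof of Theorem~\ref{T2} (Sylvester's formula appears only in the proof of Theorem~\ref{T1}, to bound barrier mass far outside $U$, an issue absent for interior barriers). Your claim that a line meeting two cells $R_i, R_j$ ``is blocked inside $R_i\cup R_j$ and therefore contributes twice to the appropriate Cauchy projection'' is not correct: a line meeting both cells may well be blocked only once, possibly in a third cell, so the measure of lines meeting $R_i$ and $R_j$ does not translate into a linear inequality on $\ell_i+\ell_j$ without further information about where the barrier actually intersects those lines. This is the step that would fail if carried out as written.

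Second, you only introduce one variable per cell. The paper's LP has $3\times 13=39$ variables because it further partitions $\Gamma$ by \emph{direction} into near-horizontal ($X$), near-vertical ($Y$), and remaining ($Z$) segments, in addition to the 13-cell spatial decomposition (an octagon, eight corner triangles, four quadrilaterals cut by segments parameterized by a single width $w$). This direction split is what lets one write tight projection inequalities in several direction families — the axis-parallel directions, the diagonal directions, and the directions of the cutting hypotenuses (inequalities~\eqref{eq:xy}--\eqref{eq:ca+i}) — since the projection coefficients of an $X$-, $Y$-, or $Z$-segment in each such direction are controlled by the angular threshold $\phi$. Without the direction split, the per-cell projection inequalities are too weak (each $\ell_i$ can only be multiplied by a worst-case trig factor over all directions), and with only projection-to-width constraints the LP optimum stays at $2$. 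The genuine source of the $10^{-5}$ slack is the interplay of direction discretization, the hypotenuse-direction projection constraints for the corner triangles, and the quantified ADVANCE constraints \eqref{eq:advancex}--\eqref{eq:advancey}; the Sylvester pairwise idea plays no role. Your degeneracy concerns and the LP-duality certification plan are sound and echo the paper's use of exact rational arithmetic, but the constraint-generation engine would need to be replaced by the directional-projection scheme above before the LP could plausibly beat $2$.
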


\section{Preliminaries} \label{sec:prelim}

\paragraph{Definitions and notations.}

For a curve $\gamma$, let $|\gamma|$ denote the length of
$\gamma$. Similarly, if $\Gamma$ is a set of curves, let $|\Gamma|$ denote
the total length of the curves in $\Gamma$.
In order to be able to refer to the \emph{length} $\len(\Gamma)$ of a
barrier $\Gamma$,  we restrict our attention to rectifiable barriers. 
A \emph{rectifiable curve} is a curve of finite length. 
A \emph{rectifiable barrier} is the union of a countable set of
\emph{rectifiable curves}, $\Gamma=\cup_{i=1}^\infty \gamma_i$, 
where $\sum_{i=1}^\infty |\gamma_i| <\infty$ 
(or $\Gamma=\cup_{i=1}^n \gamma_i$ for some $n$).
A~\emph{segment barrier} is a barrier consisting of straight-line
segments (or polygonal paths). 
The shortest segment barrier is not much longer than the shortest
rectifiable one:
\begin{lemma} \label{L1} {\rm \cite{DJP12}}.
Let $\Gamma$ be a rectifiable barrier for a convex body $C$ in the plane.
Then, for any $\eps>0$, there exists a segment barrier $\Gamma_{\eps}$ for $C$,
consisting of a countable set of straight-line segments, such that
$\len(\Gamma_{\eps}) \leq (1+\eps)\, \len(\Gamma)$.
\end{lemma}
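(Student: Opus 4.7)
The plan is to approximate each rectifiable component of $\Gamma$ independently by an inscribed polygonal path, and then augment it with a short set of auxiliary ``patch'' segments to restore the barrier property. Write $\Gamma=\bigcup_{i\ge 1}\gamma_i$ with $\sum_i|\gamma_i|=\len(\Gamma)<\infty$; it suffices to construct, for each rectifiable curve $\gamma$ of length $L$ and each $\eps>0$, a countable set of segments of total length at most $(1+\eps)L$ that blocks every line meeting $\gamma$. Taking the union over components then produces a segment barrier for $C$ of total length at most $(1+\eps)\len(\Gamma)$.

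For a single rectifiable $\gamma$, I would parameterize by arc length and choose $n+1$ sample points $p_0,\ldots,p_n$ at equal arc-length spacing, forming the inscribed polygonal path $P$ with chords $c_k=[p_{k-1},p_k]$. By the triangle inequality $|P|\le L$, and rectifiability forces the excess $L-|P|$ to $0$ as $n\to\infty$. Each sub-arc $\gamma_k$ of $\gamma$ between $p_{k-1}$ and $p_k$ has arc length at most $L/n$, and hence diameter at most $L/n$.

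The central difficulty is that a line meeting $\gamma_k$ may fail to meet the chord $c_k$; I call such lines \emph{escape lines} for the $k$-th arc. Every escape line of $\gamma_k$ passes through the small region $R_k$ bounded by $\gamma_k\cup c_k$. The key technical claim I would aim to prove is that the escape lines of $\gamma_k$ admit a segment barrier of length at most $C\sqrt{|c_k|\cdot(|\gamma_k|-|c_k|)}$ --- morally, this quantity dominates the ``bulge height'' of $\gamma_k$ above $c_k$ when $\gamma_k$ is nearly chord-convex. Cauchy--Schwarz then gives
\[
\sum_k\sqrt{|c_k|\cdot(|\gamma_k|-|c_k|)}\;\le\;\sqrt{L\cdot(L-|P|)},
\]
which is below $\eps L$ once $n$ is large enough; combined with $|P|\le L$, the total segment length is within $(1+\eps)L$.

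The main obstacle I anticipate is establishing the patch-length bound for sub-arcs that are far from chord-convex --- hairpins, cusps, or densely oscillating pieces --- where the naive bulge estimate does not apply. I would handle these by further subdivision: since $\gamma$ is rectifiable, refinement drives the total excess on any problematic collection of sub-arcs to zero, after which the remaining sub-arcs are nearly chord-like and the above bulge analysis goes through. Redistributing $\eps$ among components (e.g.\ allotting $\eps_i = \eps\,|\gamma_i|/\len(\Gamma)$ to $\gamma_i$) handles the countable-union case and completes the argument.
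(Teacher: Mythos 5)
The paper does not actually prove Lemma~\ref{L1}; it cites~\cite{DJP12} and moves on, so there is no in-text proof to compare against. Judged on its own terms, your overall plan --- inscribe a polygon $P$ in each rectifiable component, bound $|P|\le L$, add short ``patch'' segments to catch the lines that meet the arc but escape the chords, and use Cauchy--Schwarz to show the total patch length vanishes as the subdivision refines --- is a reasonable route and is likely close in spirit to the cited argument. The reduction to a single curve and the redistribution of $\eps$ over the countable union are fine.

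The genuine gap is in the patch step, and the remedy you propose for it does not work. You acknowledge that the bound $C\sqrt{|c_k|(|\gamma_k|-|c_k|)}$ only holds for nearly chord-convex sub-arcs and suggest ``further subdivision'' to make the remaining sub-arcs chord-like. But refinement cannot do that: a sub-arc containing a corner, a cusp, or a hairpin tip has $\delta_k:=|\gamma_k|-|c_k|$ comparable to $|\gamma_k|$ at every scale, since the defect ratio $\delta_k/|\gamma_k|$ is scale-invariant for a fixed turning angle. For such a piece (think of a U-turn of base $c_k$ and leg height $h$), the escape lines include all horizontals across the gap and require a patch of length $\Omega(h)=\Omega(\delta_k)$, which dwarfs $\sqrt{c_k\delta_k}$ when $c_k\ll\delta_k$. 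The fix is to replace the claimed bound by $O\bigl(\sqrt{c_k\delta_k}+\delta_k\bigr)$ and verify it: since any point of $\gamma_k$ lies in the ellipse with foci $p_{k-1},p_k$ and major axis $|\gamma_k|$, the sub-arc fits in a box $[-a,a]\times[-b,b]$ with $2a=|\gamma_k|$ and $2b=\sqrt{(2c_k+\delta_k)\delta_k}=O(\sqrt{c_k\delta_k}+\delta_k)$, and the three-segment ``\textsf{I}-shape'' $[-a,a]\times\{0\}\ \cup\ \{\pm a\}\times[-b,b]$ blocks every line meeting that box (any line avoiding the middle bar has constant sign on $[-a,a]$, and avoiding both end caps forces $|y|>b$ there, hence everywhere on $[-a,a]$). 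This gives barrier length $|\gamma_k|+4b$ per sub-arc, and summing, the overshoot is $O\bigl(\sqrt{L(L-|P|)}\bigr)+O(L-|P|)$: both terms tend to $0$, and the second term --- precisely the one your stated bound drops --- is what absorbs the hairpins. Alternatively, split off the sub-arcs with $\delta_k>c_k$, observe their total arc length is at most $2\sum_k\delta_k=2(L-|P|)\to 0$, and use any $O(|\gamma_k|)$ barrier on those; but some explicit argument of this kind is required, and ``subdivide until chord-like'' is not it.
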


Without loss of generality, we will subsequently consider only segment barriers.
We first review three different proofs for the lower bound of $2$ 
(the current best lower bound for the unit square). 

\medskip
{\em First proof}:
The first proof, Lemma~\ref{L2}, is general and applies to any planar
convex body; its proof is folklore; see also~\cite{DJP12}. Let 
$\Gamma=\{s_1,\ldots,s_n\}$ consist of $n$ segments of lengths $\ell_i=|s_i|$, 
where $L=|\Gamma|=\sum_{i=1}^n \ell_i$.
Let $\alpha_i \in [0,\pi)$ be the angle made by $s_i$ with the $x$-axis. 
For each direction $\alpha \in [0,\pi)$, the blocking (opaqueness)
  condition for a convex body $C$ requires 
\begin{equation} \label{E1}
\sum_{i=1}^n \ell_i |\cos(\alpha-\alpha_i)| \geq w(\alpha).
\end{equation}
Here $w(\alpha)$ is the width of $C$ in direction $\alpha$, \ie, the
minimum width of a strip of parallel lines enclosing $C$, whose lines
are orthogonal to direction $\alpha$. Integrating this inequality
over the interval $[0,\pi]$ yields the following.
\begin{lemma}\label{L2}
Let $C$ be a convex body in the plane and let $\Gamma$ be a barrier for $C$.
Then the length of $\Gamma$ is at least $\frac12 \cdot \per(C)$.
\end{lemma}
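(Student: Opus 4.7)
Inequality~(\ref{E1}) already does all the geometric work: it encodes, for each direction $\alpha$, the requirement that the orthogonal projection of the barrier cover the projection of $C$ (an interval of length $w(\alpha)$), using the fact that a segment $s_i$ of length $\ell_i$ at angle $\alpha_i$ projects onto the line in direction $\alpha$ as an interval of length $\ell_i |\cos(\alpha-\alpha_i)|$. My plan is simply to integrate this pointwise inequality over $\alpha \in [0,\pi]$ and evaluate both sides.

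On the right, integrating the width function over all directions yields the perimeter by Cauchy's classical formula for planar convex bodies, $\int_0^\pi w(\alpha)\, d\alpha = \per(C)$. On the left, interchanging the finite sum with the integral reduces matters to evaluating $\int_0^\pi |\cos(\alpha-\alpha_i)|\, d\alpha$ for each $i$; since $|\cos|$ is $\pi$-periodic with $\int_0^\pi |\cos \theta|\, d\theta = 2$, a change of variable shows that this integral equals $2$ independently of $\alpha_i$. Putting the two sides together gives $2\sum_{i=1}^n \ell_i \geq \per(C)$, which is exactly $|\Gamma| \geq \tfrac{1}{2} \per(C)$.

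I do not foresee any real obstacle. The only things to note in passing are that by Lemma~\ref{L1} it suffices to treat finite segment barriers (an arbitrarily small multiplicative $(1+\eps)$ slack disappears in the limit as $\eps \to 0$), and that the exchange of sum and integral on the left-hand side is trivially justified since the sum is finite.
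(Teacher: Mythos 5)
Your proof is correct and follows exactly the approach the paper indicates: integrate the opaqueness inequality~(\ref{E1}) over $\alpha \in [0,\pi]$, apply Cauchy's formula $\int_0^\pi w(\alpha)\,d\alpha = \per(C)$ on the right, and evaluate $\int_0^\pi |\cos(\alpha-\alpha_i)|\,d\alpha = 2$ on the left. Nothing is missing; the reduction to finite segment barriers via Lemma~\ref{L1} and the $\eps\to 0$ limit is also the intended justification.
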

For the unit square we have $\per(U)=4$ thus Lemma~\ref{L2} yields
the lower bound $L \geq 2$.

\medskip
{\em Second proof}:
We make use of formulas established by Sylvester~\cite{Sy1890}; see
also~\cite[pp.~32--34]{Sa04}. 
The setup is as follows. For a planar convex body $K$, the measure of
all lines that meet  $K$ is equal to $\per(K)$. In particular, if $K$
degenerates to a segment $s$, the measure of all lines that meet  $s$
is equal to $2|s|$.  

Let $\G$ denote all lines that meet $U$; 
let $\G_i$ denote all lines that meet a segment $s_i \in \Gamma$. 
The measure of all lines that meet $U$ is equal to $m(\G)=\per(U)=4$. 
Since $m()$ is a measure, we have
$$ 4 = m(\G) \leq \sum_{i=1}^n m(\G_i) = 2 \sum_{s_i \in \Gamma} |s_i|
=2L.$$
It follows that $2L \geq 4$ or $L \geq 2$, as required.

\medskip
{\em Third proof} (due to O.~Ozkan; reported in~\cite{DO08}):
This proof is specific to the square.
The setup is the same, in the sense that both  proofs assume,
without loss of generality by Lemma~\ref{L1}, a segment barrier. Let 
$\Gamma=\{s_1,\ldots,s_n\}$ consist of $n$ segments of lengths $\ell_i=|s_i|$, 
where $L=|\Gamma|=\sum_{i=1}^n \ell_i$.
Recall that $d_1$ and $d_2$ are the two diagonals of $U$.
Let $\theta_i \in [0,\pi)$ be the angle made by $s_i$ with
the first diagonal $d_1$. Consider
the blocking (opaqueness) conditions only for these two directions,
that is, for $\alpha=\pi/4$, and $\alpha=3\pi/4$.
Equation~\eqref{E1} for these two directions gives now:
\begin{equation} \label{E2}
\sum_{i=1}^n \ell_i |\cos \theta_i | \geq \sqrt{2}, \ \ {\rm and} \ \ 
\sum_{i=1}^n \ell_i |\sin \theta_i | \geq \sqrt{2}.
\end{equation}
Consequently, since $|\cos \theta_i |+ |\sin \theta_i | \leq
\sqrt{2}$ holds for any angle $\theta_i$, we have
\begin{equation} \label{E3}
2\sqrt{2} \leq \sum_{i=1}^n \ell_i (|\cos \theta_i|+
|\sin \theta_i |) \leq \sqrt2 \sum_{i=1}^n \ell_i
\quad\Rightarrow\quad \ L=\sum_{i=1}^n \ell_i \geq 2.
\end{equation}

An obvious question is whether any of these proofs can give more.
Regarding the third proof, if one considers only those four main
directions used there, namely the two coordinate axes and the two diagonal
directions, there is no hope left. 
Interestingly enough, there exists a structure (imperfect barrier) of
length $2$ made of four axis-parallel segments, that perfectly blocks
(\ie, with no overlap) the four main directions; see Figure~\ref{f2}.
Thus one needs to find other directions that are not opaque besides
these four. This observation was the starting point of our investigations.  
\begin{figure}[htbp]
\centerline{\epsfxsize=1.3in \epsffile{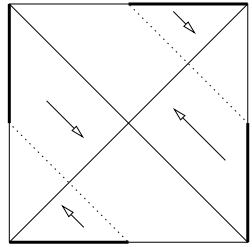}}
\caption{This structure of length $2$ perfectly blocks the four main
  directions; here shown for the main diagonal.}
\label{f2}
\end{figure}

\paragraph{Setup for the new lower bound of $2+10^{-12}$.}
We set four parameters:
\begin{itemize} \itemsep 1pt
\item  $\delta=10^{-12}$, \quad $\phi=\arcsin{10^{-4}}$; 
note that $10^8 \delta= \sin \phi$. 
\item $w_1=1/20$, \quad $w_2=1/1000$. 
\end{itemize}

Refer now to Figure~\ref{f3} which illustrates various regions we
define below in relation to the unit square $U$ (recall that parts of
the barrier may be located in the exterior of $U$). 

\begin{itemize} \itemsep 1pt
\item [$\diamond$] $U=[0,1]^2$ is an (axis-aligned) unit square centered at
point $o=(1/2,1/2)$. 
\item [$\diamond$] $U_1=[w_1,1-w_1]^2$ is an (axis-aligned) square concentric with $U$. 
\item [$\diamond$] $U_2=[-w_2,1+w_2]^2$ is an (axis-aligned) square concentric with $U$. 
\item [$\diamond$] $U_3=[-1/2,3/2]^2$ is an (axis-aligned) square of
  side length $2$ concentric with $U$. 
\item [$\diamond$] $Q_1$ is a square of side length $\sqrt{2}/2$ concentric with
  $U$ and rotated by $\pi/4$. 
\item [$\diamond$] $Q_2$ is a square of side length $\sqrt{2}$ concentric with
  $U$ and rotated by $\pi/4$. 
\item [$\diamond$] $V = [0,1] \times (-\infty,+\infty)$
is the (infinite) vertical strip of unit width containing $U$.
\item [$\diamond$] $H = (-\infty,+\infty) \times [0,1]$
is the (infinite) horizontal strip of unit width containing $U$.
\item [$\diamond$] $d_1$ is $U$'s diagonal of positive slope and $d_2$ is $U$'s
  diagonal of negative slope. 
\item [$\diamond$] $W_1,W_2,W_3,W_4$ are the four wedges centered at $o$ and bounded by
the lines supporting the two diagonals of $U$, and directed to the
right, up, left, and down (\ie, in counterclockwise order).
\item [$\diamond$] $U_\textup{right}=[1-w_1,1+w_2] \times [0,1]$ is a
  thin rectangle of width $w_1+w_2$ and height $1$ whose right side
  coincides with the right side of $U_2$. Similarly, denote by 
$U_\textup{left}$, $U_\textup{low}$, $U_\textup{high}$, the
analogous rectangles contained in $U_2 \setminus U_1$ and sharing the corresponding
sides of $U_2$, as indicated. 
\end{itemize}

Observe that $U_1 \subset U \subset U_2$, and $Q_1 \subset U \subset Q_2$.
Note also that the inclusion 
$ U_1 \subset U_2 \setminus 
(U_\textup{low} \cup U_\textup{high} \cup U_\textup{left} \cup U_\textup{right})$
is strict. 
Complementary regions such as $\RR^2 \setminus U_2$,
$\RR^2 \setminus Q_2$, $\RR^2 \setminus V$, $\RR^2 \setminus H$,
are denoted by $\overline{U_2}$, $\overline{Q_2}$, $\overline{V}$,
$\overline{H}$.
\begin{figure}[htbp]
\centerline{\epsfxsize=5.1in \epsffile{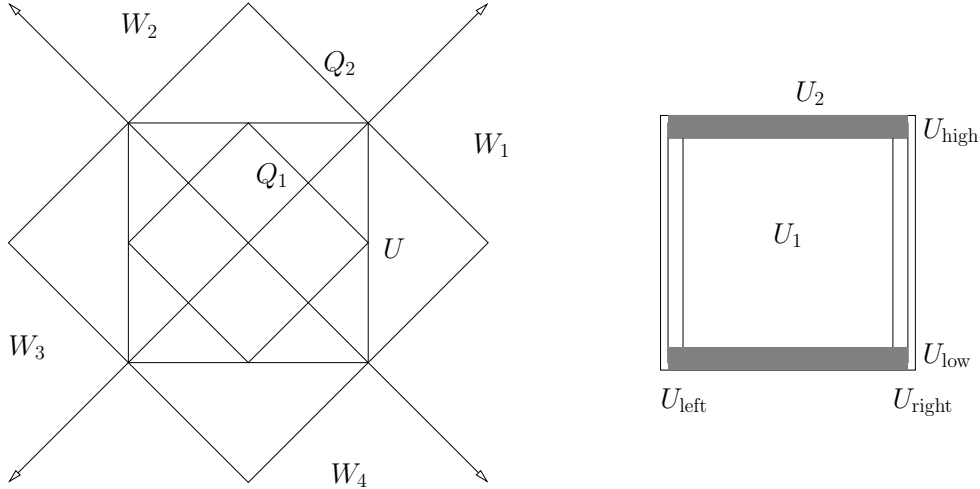}}
\caption{Left: The unit square $U$ and the rotated squares $Q_1$
and $Q_2$. Right: $U_1$ and $U_2$; two thin rectangles 
$U_\textup{low},U_\textup{high} \subset U_2 \setminus U_1$ (out of the
four) are shaded.}
\label{f3}
\end{figure}

We say that a segment (or a line) is {\em almost horizontal} if its
direction angle belongs to the interval $[-\phi,\phi]$.
Similarly, we say that a segment (or a line) is {\em almost vertical}
if its direction angle belongs to the interval 
$[\frac{\pi}{2}-\phi,\frac{\pi}{2}+\phi]$.
Let $\Gamma$ be a segment barrier for $U$ of length $L=|\Gamma|$. 
Let $X$ be the set of almost horizontal segments in $\Gamma$,
and $Y$ be the set of almost vertical segments in $\Gamma$. Let $Z$ be
the rest of the segments in $\Gamma$. Clearly, we have $L=|\Gamma|= |X|+|Y|+|Z|$.

\section{Local barriers: proof of Theorem~\ref{T1}} \label{sec:T1}

Let $\Gamma$ be a segment barrier for $U$ of length $L=|\Gamma|$. 
Without loss of generality by Lemma~\ref{L1}, we can assume that
$\Gamma$ is a segment barrier.  
Moreover, the lower bound of $2$ on its length (discussed previously) 
remains valid: $L \geq 2$. Assume for contradiction that $L \leq 2+\delta$. 
We first establish several structural properties of~$\Gamma$: 
\begin{itemize} \itemsep 1pt
\item $\Gamma$ must consist mostly of almost horizontal segments and
almost vertical segments. The total length of the almost horizontal
segments must be close to $1$, and similarly, the total length of the
almost vertical segments must be close to $1$. 
\item The total length of the segments in the exterior of $U_2$ must
  be small. 
\item For each side $s$ of $U$, a thin rectangle parallel to $s$ and
  enclosing $s$ must contain a set of significant weight consisting of barrier segments 
  almost parallel to $s$.
\end{itemize}

Once established, these structural properties of $\Gamma$ are used to find a line 
that is witness to the invalidity of $\Gamma$. By way of contradiction, 
the lower bound in Theorem~\ref{T1} will consequently follow. 
Let us record our initial assumptions to start with:
\begin{equation} \label{E4}
2 \leq L =|X|+|Y|+|Z| \leq 2+\delta.
\end{equation}

To begin our proof, we first refine Ozkan's argument  
(in Section~\ref{sec:prelim}) for the lower bound of $2$. 
We first show that the total length of the segments in $Z$ is small.

\begin{lemma}\label{L3}
The total length of the segments in $Z$ satisfies: 
$|Z| \leq 2 \cdot 10^8 \delta = 2 \sin \phi$.
\end{lemma}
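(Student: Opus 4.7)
The plan is to sharpen the inequality $|\cos\theta_i|+|\sin\theta_i|\le\sqrt 2$ from Ozkan's argument (used in (E3)) separately for the segments that belong to $Z$. The function $f(\theta)=|\cos\theta|+|\sin\theta|$ is $\pi/2$-periodic, on each fundamental domain equals $\sqrt 2\sin(\theta+\pi/4)$, and attains its maximum $\sqrt 2$ precisely when $\theta\equiv\pi/4\pmod{\pi/2}$. Hence, if $\theta$ is at angular distance at least $\phi$ from every such maximum, then $f(\theta)\le\sqrt 2\cos\phi$.

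Next I would translate the definitions of $X$, $Y$, $Z$ into constraints on the angle $\theta_i$ between $s_i$ and the diagonal $d_1$. Since $d_1$ makes angle $\pi/4$ with the $x$-axis, an almost horizontal segment has $\theta_i$ within $\phi$ of $-\pi/4\pmod{\pi}$ and an almost vertical segment has $\theta_i$ within $\phi$ of $\pi/4\pmod{\pi}$; therefore every $s_i\in Z$ has $\theta_i$ at angular distance strictly greater than $\phi$ from every maximum of $f$, and consequently satisfies $|\cos\theta_i|+|\sin\theta_i|\le\sqrt 2\cos\phi$. Adding the two inequalities in (E2) and splitting the resulting sum by type then gives
\[
2\sqrt 2\;\le\;\sum_{i=1}^n \ell_i\bigl(|\cos\theta_i|+|\sin\theta_i|\bigr)\;\le\;\sqrt 2\,(|X|+|Y|)+\sqrt 2\cos\phi\,|Z|\;=\;\sqrt 2\,L-\sqrt 2\,(1-\cos\phi)|Z|,
\]
which together with $L\le 2+\delta$ from (E4) yields $(1-\cos\phi)|Z|\le\delta$, i.e.\ $|Z|\le\delta/(1-\cos\phi)$.

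The remaining step is purely numerical. Rationalizing, $1-\cos\phi=\sin^2\phi/(1+\cos\phi)=10^{-8}/(1+\sqrt{1-10^{-8}})>10^{-8}/2$, so $1/(1-\cos\phi)<2\cdot 10^8$ and therefore $|Z|<2\cdot 10^8\,\delta=2\sin\phi$, as required. The only substantive thing to get right is the calibration between the two small parameters: the second-order slack $1-\cos\phi\sim\phi^2/2$ together with the preliminaries' choice $10^8\delta=\sin\phi$ is precisely what makes the refined Ozkan bound strong enough to match the lemma's statement; no further geometric argument is needed. Because this step is a one-line algebraic verification, I expect no real obstacle — the only place where care is needed is in checking that each segment of $Z$ genuinely lies a full $\phi$ away from both diagonal directions (so that the uniform bound $f(\theta_i)\le\sqrt 2\cos\phi$ applies to every $s_i\in Z$), which is immediate from how $X$ and $Y$ were defined.
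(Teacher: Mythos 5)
Your proposal is correct and takes essentially the same approach as the paper: both hinge on the sharpened bound $|\cos\theta_i|+|\sin\theta_i|\le\sqrt2\cos\phi$ for segments in $Z$, combined with Ozkan's inequality $2\sqrt2\le\sum\ell_i(|\cos\theta_i|+|\sin\theta_i|)$ and the assumption $L\le2+\delta$. The only difference is presentational---you derive the bound $(1-\cos\phi)|Z|\le\delta$ directly and then verify $1-\cos\phi>10^{-8}/2$, whereas the paper sets $c=2\cdot10^8$, assumes $|Z|\ge c\delta$ for contradiction, and checks that $\cos\phi<(c-1)/c$; these are trivially equivalent.
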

\begin{proof}
Put $c=2 \cdot 10^8$, and assume for contradiction that $|Z| \geq c\delta$, hence
$|X|+|Y| \leq 2 +\delta -c\delta = 2 -(c-1)\delta$. 
Observe that for any segment in $Z$, we have (with $\theta_i$ as in
the respective proof) 
$$ |\cos \theta_i |+ |\sin \theta_i | \leq
\cos \left(\frac{\pi}{4}-\phi\right) +
\sin \left(\frac{\pi}{4}-\phi\right) :=a. $$
Note that 
$$ a= \frac{\sqrt2}{2} \cos \phi + \frac{\sqrt2}{2} \sin \phi +
\frac{\sqrt2}{2} \cos \phi - \frac{\sqrt2}{2} \sin \phi =
\sqrt2 \cos \phi <\sqrt{2}. $$
By the assumption, the first inequality in \eqref{E3} yields
$$ 2\sqrt{2} \leq (2 -(c-1)\delta) \sqrt2 + c a \delta < 2\sqrt{2}, $$
a contradiction.
Indeed, the second inequality in the above chain is equivalent to
$$ \cos \phi < \frac{c-1}{c}, $$
which holds since
\begin{equation*}
\cos \phi = \sqrt{1 - \sin^2 \phi} =
\sqrt{1 - 10^{-8}} < 1 - \frac{1}{2 \cdot 10^8} =\frac{c-1}{c}.
\tag*{\qedhere}
\end{equation*}
\end{proof}

Next we show that the total length of the almost horizontal segments
is close to $1$, and similarly,  that the total length of
the almost vertical segments is close to $1$.

\begin{lemma}\label{L4}
The following inequalities hold:
\begin{align} \label{E5}
1 - \frac72 \sin \phi &\leq |X| \leq 1 + \frac32 \sin \phi \nonumber \\
1 - \frac72 \sin \phi &\leq |Y| \leq 1 + \frac32 \sin \phi. 
\end{align}
\end{lemma}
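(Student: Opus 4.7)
The plan is to apply the opaqueness inequality \eqref{E1} in the two coordinate directions $\alpha=0$ (blocking vertical lines) and $\alpha=\pi/2$ (blocking horizontal lines), where $w(\alpha)=1$. Splitting each of the resulting sums $\sum_i\ell_i|\cos\alpha_i|\ge 1$ and $\sum_i\ell_i|\sin\alpha_i|\ge 1$ over the three classes $X,Y,Z$ and using the pointwise bounds $|\sin\alpha_i|\le\sin\phi$ for $s_i\in X$, $|\cos\alpha_i|\le\sin\phi$ for $s_i\in Y$, and $|\cos\alpha_i|,|\sin\alpha_i|\le\cos\phi$ for $s_i\in Z$, I obtain the two linear inequalities
\begin{equation*}
|X|+|Y|\sin\phi+|Z|\cos\phi \ge 1 \quad\text{and}\quad |X|\sin\phi+|Y|+|Z|\cos\phi \ge 1.
\end{equation*}
These are the axis-aligned refinements of Ozkan's two-direction argument, now giving separate information about $|X|$ and $|Y|$.

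Next I would combine the second inequality with the assumption $L\le 2+\delta$ from \eqref{E4}. The second inequality gives $|Y|\ge 1-|X|\sin\phi-|Z|\cos\phi$; substituting this into $|X|+|Y|+|Z|\le 2+\delta$ and discarding the nonnegative term $|Z|(1-\cos\phi)$ yields $|X|(1-\sin\phi)\le 1+\delta$, that is,
\begin{equation*}
|X|\le \frac{1+\delta}{1-\sin\phi}.
\end{equation*}
A short numerical check using $\delta=10^{-12}$ and $\sin\phi=10^{-4}$ (recall $10^8\delta=\sin\phi$) then yields $|X|\le 1+\tfrac{3}{2}\sin\phi$; the required inequality rearranges to $\delta\le\tfrac12\sin\phi-\tfrac32\sin^2\phi$, which holds with enormous room to spare. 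The upper bound $|Y|\le 1+\tfrac32\sin\phi$ follows by the mirror argument starting from the first inequality.

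For the lower bounds I would combine the just-proved upper bound on $|Y|$, the classical inequality $L\ge 2$ available from \eqref{E4}, and Lemma~\ref{L3}'s bound $|Z|\le 2\sin\phi$:
\begin{equation*}
|X| \;=\; L-|Y|-|Z| \;\ge\; 2-\bigl(1+\tfrac32\sin\phi\bigr)-2\sin\phi \;=\; 1-\tfrac72\sin\phi,
\end{equation*}
and symmetrically for $|Y|$.

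I do not expect any deep obstacle here: the argument is essentially a linear-programming-style combination of the two axis-aligned opaqueness conditions, Lemma~\ref{L3}, and \eqref{E4}. The one slightly delicate point is the numerical verification that $\delta/(1-\sin\phi)$ is absorbed inside the gap between $\sin\phi$ and $\tfrac32\sin\phi$; this works precisely because the parameters were chosen so that $\delta\ll\sin^2\phi$ (indeed $10^8\delta=\sin\phi$), and the same scale separation explains why the looser constant $\tfrac72$ on the lower side (coming from $\tfrac32$ plus $2$ from Lemma~\ref{L3}) is the natural one to state.
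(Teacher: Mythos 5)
Your proof is correct and closely parallels the paper's. For the upper bound, both arguments reduce to combining an axis-aligned projection opaqueness condition with $|X|+|Y|+|Z|\le 2+\delta$ to obtain $|X|(1-\sin\phi)\le 1+\delta$; the paper organizes this as a proof by contradiction via the chain $|X|+|Z|+|Y|\sin\phi \le 2+\delta-|Y|(1-\sin\phi) < 1$, but the arithmetic is the same, and your numerical check $\delta \le \tfrac12\sin\phi-\tfrac32\sin^2\phi$ is exactly the inequality the paper verifies. For the lower bound you take a mild shortcut: the paper again invokes the projection opaqueness condition $|X|+|Z|+|Y|\sin\phi\ge 1$ (which in fact yields the slightly stronger $|X|\ge 1-3\sin\phi-\tfrac32\sin^2\phi$), whereas you use $L\ge 2$ from~\eqref{E4} together with the just-proved upper bound on $|Y|$ and Lemma~\ref{L3}'s bound $|Z|\le 2\sin\phi$ to land on exactly $1-\tfrac72\sin\phi$. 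Both routes are valid; yours is a bit more economical since it needs no further appeal to opaqueness once the upper bounds and Lemma~\ref{L3} are in hand.
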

\begin{proof}
It follows from~\eqref{E4} that $|X|+|Z| \leq 2+\delta -|Y|$.
We first prove the upper bounds: $|X| \leq 1 + \frac32 \sin \phi$ 
and $|Y| \leq 1 + \frac32 \sin \phi$. 
Assume first for contradiction that $|Y| \geq 1 + \frac32 \sin \phi$. 
The total projection length of the segments in $\Gamma$ on the
$x$-axis is at most
\begin{align*}
|X| + |Z| + |Y| \cos \left(\frac{\pi}{2}-\phi \right) &\leq
(2+ \delta -|Y|) +|Y| \sin \phi =
2+ \delta - |Y| (1-\sin \phi) \\
&\leq 2+ \delta - \left(1+\frac32 \sin \phi\right) (1-\sin \phi)=
1+ \delta -\frac12 \sin \phi + \frac32 \sin^2 \phi < 1, 
\end{align*}
\ie, smaller than the corresponding unit width of $U$.
This contradicts the opaqueness condition for vertical rays,
hence $|Y| \leq 1 + \frac32 \sin \phi$. Similarly we establish that 
$|X| \leq 1 + \frac32 \sin \phi$.

We now prove the lower bounds: $|X| \geq 1 - \frac72 \sin \phi$ and
$|Y| \geq 1 - \frac72 \sin \phi$.  
Assume  for contradiction that $|X| \leq 1 - \frac72 \sin \phi$.
Using the previous upper bound on $|Y|$, the total projection length of
the segments in $\Gamma$ on the $x$-axis is 
\begin{align*}
|X| + |Z| + |Y| \sin \phi &\leq
\left(1 - \frac72 \sin \phi\right)
 + 2 \sin \phi + \left(1 + \frac32 \sin \phi\right) \sin \phi \\
&= 1 - \frac12 \sin \phi + \frac32 \sin^2 \phi <1, 
\end{align*}
\ie, smaller than the corresponding width of $U$. 
This is a contradiction, hence $|X| \geq 1 - \frac72 \sin \phi$.
Similarly we establish that $|Y| \geq 1 -  \frac72 \sin \phi$.
\end{proof}

A further restriction on the placement of the segments in $\Gamma$ is given by:

\begin{lemma}\label{L5}
The following inequalities hold:
\begin{equation} \label{E6}
|X \cap \overline{V}| \leq \frac92 \sin \phi \quad
{\rm and } \quad |Y \cap \overline{H}| \leq \frac92 \sin \phi.
\end{equation}
\end{lemma}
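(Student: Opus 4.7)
The strategy is to apply the opaqueness condition for vertical lines through $U$: the union of the $x$-projections of the barrier segments must cover $[0,1]$. Since parts of segments lying in $\overline{V}$ have $x$-coordinates outside $[0,1]$, they contribute nothing to this coverage. Bounding each segment's $x$-projection by its length times $|\cos(\text{angle from horizontal})|$ — which is at most $1$ for parts in $X \cup Z$ and at most $\sin \phi$ for parts in $Y$ — I obtain
\[
|X \cap V| + |Y \cap V|\sin \phi + |Z \cap V| \;\geq\; 1.
\]
Using $|Y \cap V| \leq |Y|$ and $|Z \cap V| \leq |Z|$ and rearranging gives $|X \cap V| \geq 1 - |Y|\sin\phi - |Z|$, hence
\[
|X \cap \overline{V}| \;=\; |X| - |X\cap V| \;\leq\; |X| - 1 + |Y|\sin\phi + |Z|.
\]

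Next, I substitute $|X| + |Z| \leq 2 + \delta - |Y|$ from the initial assumption \eqref{E4} to get $|X \cap \overline{V}| \leq 1 + \delta - |Y|(1 - \sin \phi)$. Plugging in the Lemma~\ref{L4} lower bound $|Y| \geq 1 - \tfrac{7}{2}\sin \phi$ and expanding $(1 - \tfrac{7}{2}\sin \phi)(1 - \sin \phi) = 1 - \tfrac{9}{2}\sin\phi + \tfrac{7}{2}\sin^2\phi$, I arrive at
\[
|X \cap \overline{V}| \;\leq\; \delta + \tfrac{9}{2}\sin\phi - \tfrac{7}{2}\sin^2\phi.
\]

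The delicate point is the $\sin^2 \phi$ term. Had I bounded $|X|$ directly via the Lemma~\ref{L4} upper bound $|X| \leq 1 + \tfrac{3}{2}\sin\phi$, I would be left with a stray $+\tfrac{3}{2}\sin^2\phi$ that nothing absorbs, spoiling the stated inequality. The trick is to route the argument through the sum $|X|+|Y|+|Z|$ and the \emph{lower} bound on $|Y|$, which flips the sign of the quadratic term. Since $\delta = 10^{-12}$ is negligible compared with $\tfrac{7}{2}\sin^2\phi = 3.5 \times 10^{-8}$, the negative quadratic term absorbs $\delta$ and yields $|X \cap \overline{V}| \leq \tfrac{9}{2}\sin\phi$. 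The inequality for $|Y \cap \overline{H}|$ follows by the entirely symmetric argument applying opaqueness to horizontal lines, with the roles of $x$/$y$ and $X$/$Y$ exchanged.
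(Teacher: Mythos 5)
Your proof is correct and follows essentially the same route as the paper: project onto the $x$-axis, use $|X|+|Z| \le 2+\delta-|Y|$ from~\eqref{E4}, and then apply the \emph{lower} bound on $|Y|$ from Lemma~\ref{L4} so that the $\sin^2\phi$ term comes in with a favorable sign and absorbs $\delta$. The only cosmetic difference is that you phrase it as a direct chain of inequalities whereas the paper argues by contradiction (assuming $|X\cap\overline{V}| \ge \tfrac{9}{2}\sin\phi$ and deriving that the projections fail to cover $[0,1]$); the algebra is identical.
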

\begin{proof}
By Lemma~\ref{L4}, we have $|Y| \geq 1 - \frac72 \sin \phi$.
Assume for contradiction that $|X \cap \overline{V}| \geq \frac92 \sin \phi$.
It follows that the total projection length of the segments in
$\Gamma$ on the interval $[0,1]$ of the $x$-axis is at most
\begin{align*}
|X \cap V| &+ |Z| + |Y| \sin \phi =
|X| - |X \cap \overline{V}| + |Z| + |Y| \sin \phi \\
&\leq 2+ \delta -\frac92 \sin \phi -|Y|  + |Y| \sin \phi 
=2+ \delta -\frac92 \sin \phi -|Y| (1-\sin \phi) \\
&\leq 2+ \delta -\frac92 \sin \phi - 
\left(1 - \frac72 \sin \phi \right) (1-\sin \phi) 
\leq 1+ \delta - \frac72 \sin^2 \phi <1, 
\end{align*}
\ie, smaller than the corresponding unit width.
This is a contradiction, hence $|X \cap \overline{V}| \leq \frac92 \sin \phi$.
Similarly we establish that $|Y \cap \overline{H}| \leq \frac92 \sin \phi$.
\end{proof}

\begin{lemma}\label{L6}
Let $I,J \subset \RR$ be two intervals $($not necessarily contained in $[0,1]$$)$. 
Then 
\begin{align*}
|X \cap I \times (-\infty,\infty)| &\leq |I \cap [0,1]| + 5 \sin \phi,
\textup{ and similarly } \\
|Y \cap (-\infty,\infty)| \times J &\leq |J \cap [0,1]| + 5 \sin \phi. 
\end{align*}
\end{lemma}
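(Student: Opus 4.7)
The plan is to refine the projection-blocking argument used in Lemma~\ref{L4} by localizing it to the substrip over $[0,1]\setminus I$ and then to feed in the upper bounds on $|X|$, $|Y|$, $|Z|$ established in Lemmas~\ref{L3} and~\ref{L4}.

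Set $I_0 := I \cap [0,1]$. I would first apply the opaqueness condition to the vertical rays whose $x$-coordinate lies in $[0,1]\setminus I_0$: each of these passes through $U$ and must therefore be blocked by $\Gamma$, so projecting $\Gamma$ onto the $x$-axis and using the fact that a sum of projection lengths dominates the measure of the union yields
\[
\sum_{s \in \Gamma} \bigl|p_x(s) \cap ([0,1] \setminus I_0)\bigr| \;\ge\; 1 - |I_0|,
\]
where $p_x$ denotes projection onto the $x$-axis. For a segment $s$ at angle $\theta_s$ from the $x$-axis, the portion contributing to this sum is exactly $s \cap (([0,1]\setminus I_0)\times \RR)$, contracted by the factor $|\cos \theta_s|$. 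Splitting the sum according to the trichotomy $\Gamma = X \cup Y \cup Z$, and using $|\cos \theta_s|\le 1$ for $X$ and $Z$ and $|\cos \theta_s|\le \sin\phi$ for $Y$, gives
\[
|X \cap (([0,1]\setminus I_0)\times \RR)| + |Z| + |Y|\sin\phi \;\ge\; 1 - |I_0|.
\]

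Now $I \times \RR$ and $([0,1]\setminus I_0) \times \RR$ are disjoint subsets of the plane, so $|X \cap (I \times \RR)| \le |X| - |X \cap (([0,1]\setminus I_0)\times \RR)|$. Substituting the lower bound above along with $|X|,|Y|\le 1+\tfrac{3}{2}\sin\phi$ (Lemma~\ref{L4}) and $|Z|\le 2\sin\phi$ (Lemma~\ref{L3}), the leading terms on the right-hand side add up to $\tfrac{3}{2}\sin\phi + 2\sin\phi + \sin\phi = \tfrac{9}{2}\sin\phi$, plus a negligible $O(\sin^2\phi)$ contribution, which comfortably fits under $5\sin\phi$ since $\sin\phi = 10^{-4}$. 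This establishes the first inequality, and the second follows by the symmetric argument applied to horizontal rays whose $y$-coordinate lies in $[0,1]\setminus (J \cap [0,1])$, projecting onto the $y$-axis instead.

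No serious obstacle arises. The only point of care is the bookkeeping for parts of $X$ lying outside the vertical strip $V$: these have $x$-coordinate outside $[0,1]$, hence are automatically disjoint from $([0,1]\setminus I_0)\times \RR$ and contribute nothing to the blocking sum, so the split-segment accounting goes through cleanly without needing to invoke Lemma~\ref{L5}.
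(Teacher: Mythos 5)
Your proof is correct and follows essentially the same approach as the paper's, which runs the identical projection-blocking argument (splitting along $X,Y,Z$, bounding $|\cos\theta_s|$ by $1$ or $\sin\phi$, and using the disjointness of the strips over $I$ and $[0,1]\setminus I$ together with the bounds from Lemmas~\ref{L3} and~\ref{L4}), merely phrased by contradiction rather than directly. You are also right that Lemma~\ref{L5} is not actually needed here; the paper cites it, but only disjointness and Lemmas~\ref{L3}--\ref{L4} are used.
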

\begin{proof}
Put $\overline{I}=[0,1] \setminus I$, and $\overline{J}=[0,1] \setminus J$.
Assume for contradiction that 
$|X \cap I \times (-\infty,\infty)| \geq |I \cap [0,1]| + 5 \sin \phi$.
Then by Lemma~\ref{L5} we have
\begin{align*}
|X \cap \overline{I} \times (-\infty,\infty)| &= 
|X| -|X \cap I \times (-\infty,\infty)|  \\
&\leq \left(1 + \frac32 \sin \phi \right) -|I \cap [0,1]| -5 \sin \phi
= 1 - |I \cap [0,1]| -\frac72 \sin \phi.
\end{align*}
However, since
\begin{align*}
|X \cap \overline{I} \times (-\infty,\infty)| + |Z| + |Y| \sin \phi 
&\leq \left(1 - |I \cap [0,1]| -\frac72 \sin \phi \right) + 2 \sin \phi + 
\left(1 + \frac32 \sin \phi \right) \sin \phi \\
&= 1 - |I \cap [0,1]| -\frac12 \sin \phi + \frac32 \sin^2 \phi < 1 - |I \cap [0,1]|,
\end{align*}
the vertical lines intersecting the lower side of $U$ in
$\overline{I}$ are not blocked, which is a contradiction.

The proof of the second inequality is analogous.
\end{proof}

Next we show that the total length of the segments in $\Gamma$
lying in the exterior of $Q_2$ is small.

\begin{lemma}\label{L7}
The following inequality holds:
\begin{equation} \label{E7}
| \Gamma \cap \overline{Q_2} | \leq 4\delta.
\end{equation}
\end{lemma}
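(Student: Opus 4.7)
The plan is to refine Ozkan's proof of the $L \ge 2$ lower bound by restricting each segment's diagonal projection to the parameter range of those diagonal lines that actually meet $U$, and then to show that segments inside $Q_2$ can contribute up to $\sqrt{2}$ per unit length to the two refined diagonal opaqueness inequalities combined, while segments outside $Q_2$ can contribute at most $1$ per unit length; combined with the assumption $L \le 2+\delta$ from~\eqref{E4}, this produces the claimed bound on $|\Gamma \cap \overline{Q_2}|$. For the setup, pass to the rotated coordinates $(u,v)$ given by $u=\frac{(x-1/2)+(y-1/2)}{\sqrt{2}}$ and $v=\frac{(y-1/2)-(x-1/2)}{\sqrt{2}}$, so that $d_1$ becomes the $u$-axis, $Q_2$ becomes the axis-aligned square $[-a,a]^2$ with $a=1/\sqrt{2}$, and $U$ becomes the rhombus inscribed in $Q_2$ with vertices $(\pm a,0),(0,\pm a)$. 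In these coordinates, lines of $xy$-slope $+1$ meeting $U$ are the horizontal lines $v=v_0$ with $v_0\in[-a,a]$, and lines of $xy$-slope $-1$ meeting $U$ are the vertical lines $u=u_0$ with $u_0\in[-a,a]$. Split each segment of $\Gamma$ along $\partial Q_2$, so that afterwards every segment is contained either in $Q_2$ or in $\overline{Q_2}$ (this does not change $L$); for each segment $s_i$, let $P_u(s_i),P_v(s_i)$ be its projections onto the $u$- and $v$-axes, and let $\theta_i$ be its $uv$-angle.

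The blocking requirement for horizontal lines $v=v_0$, $v_0\in[-a,a]$, forces $\bigcup_i P_v(s_i)\supseteq[-a,a]$, so subadditivity yields $\sum_i |P_v(s_i)\cap[-a,a]|\ge\sqrt{2}$, and symmetrically $\sum_i |P_u(s_i)\cap[-a,a]|\ge\sqrt{2}$; summing,
\[
\sum_i C(s_i)\ge 2\sqrt{2}, \quad\text{where } C(s_i):=|P_u(s_i)\cap[-a,a]|+|P_v(s_i)\cap[-a,a]|.
\]
For $s_i\subset Q_2$ both projections already lie inside $[-a,a]$, so $C(s_i)=\ell_i(|\cos\theta_i|+|\sin\theta_i|)\le\sqrt{2}\,\ell_i$, recovering Ozkan's bound. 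For $s_i\subset\overline{Q_2}$, decompose $s_i$ into connected sub-portions of lengths $\ell_i^{(1)},\ell_i^{(2)},\ell_i^{(3)}$ according to whether a point has $u\in[-a,a]$ (and then necessarily $|v|>a$, since $s_i$ misses $Q_2$), $v\in[-a,a]$ (and then $|u|>a$), or both $|u|,|v|>a$. Since $u(t)$ and $v(t)$ vary monotonically along the straight segment and the two $t$-subintervals $\{u(t)\in[-a,a]\}$ and $\{v(t)\in[-a,a]\}$ are disjoint (their overlap would force the segment into $Q_2$), it follows that $|P_u(s_i)\cap[-a,a]|=\ell_i^{(1)}|\cos\theta_i|$ and $|P_v(s_i)\cap[-a,a]|=\ell_i^{(2)}|\sin\theta_i|$, giving $C(s_i)\le\ell_i^{(1)}+\ell_i^{(2)}\le\ell_i$.

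Writing $A:=|\Gamma\cap Q_2|$ and $B:=|\Gamma\cap\overline{Q_2}|$, summing the per-segment bounds yields $\sqrt{2}\,A+B\ge 2\sqrt{2}$, i.e., $A\ge 2-B/\sqrt{2}$. Substituting into $A+B=L\le 2+\delta$ gives
\[
B \;\le\; \frac{\delta}{1-1/\sqrt{2}} \;=\; (2+\sqrt{2})\,\delta \;<\; 4\delta,
\]
as required. The substantive step is the per-segment bound $C(s_i)\le\ell_i$ for $s_i\subset\overline{Q_2}$, whose key geometric input is that a straight segment avoiding the convex body $Q_2$ cannot have its $u$-coordinate in $[-a,a]$ and its $v$-coordinate in $[-a,a]$ simultaneously; this disjointness is precisely what prevents an outside segment from saturating both refined diagonal opaqueness inequalities at once, and forces the factor-of-$\sqrt{2}$ loss relative to inside segments. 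Everything else is a one-line linear computation.
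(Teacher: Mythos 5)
Your proof is correct and uses the same underlying idea as the paper's: bound the per-unit-length contribution of segments in $\overline{Q_2}$ to the two diagonal opaqueness constraints by $1$ (versus $\sqrt2$ for segments in $Q_2$), then conclude from $L \le 2+\delta$. The paper phrases the key observation as ``any segment in $\Gamma\cap\overline{Q_2}$ projects either in the exterior of $d_1$ on its supporting line, or in the exterior of $d_2$,'' and deduces that the segments projecting (in part) on both diagonals have total length $\le L - |\Gamma\cap\overline{Q_2}|$. Taken literally, that observation can fail: a segment in $\overline{Q_2}$ that straddles a corner region --- for instance, in your rotated coordinates a segment on the line $u+v=2.1a$ running from $(3a,-0.9a)$ to $(-0.9a,3a)$ --- has both its $u$- and $v$-projections meeting $[-a,a]$. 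Your decomposition of an outside segment into the sub-arcs with $u\in[-a,a]$, with $v\in[-a,a]$, and with neither, together with the observation that the first two are disjoint precisely because the segment misses $Q_2$, makes the estimate $C(s_i)\le\ell_i$ airtight and handles this corner case. You also argue directly rather than by contradiction and get the sharper constant $(2+\sqrt2)\delta$, though that makes no difference to the lemma as stated. In short: same approach, correctly and a bit more carefully executed.
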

\begin{proof}
Assume for contradiction that
$| \Gamma \cap \overline{Q_2} | \geq 4\delta$.
Observe that any segment in $\Gamma \cap \overline{Q_2}$ projects
either in the exterior of $d_1$ on its supporting line, or in
the exterior of $d_2$ on its supporting line. It follows from
\eqref{E4} that the total length of the segments in $\Gamma$ that project
(at least in part) on both diagonals is at most $2 + \delta- 4\delta =
2 -3\delta$. Therefore the total projection length
of the segments in $\Gamma$ on the two diagonals (see also~\eqref{E3}) is
at most
$$ (2 -3\delta) \sqrt{2} + 4\delta = 2 \sqrt{2} 
- (3 \sqrt{2} -4) \delta < 2 \sqrt{2}, $$ 
that is, smaller than the sum of the lengths of the two diagonals.
This is a contradiction, hence $ |\Gamma \cap \overline{Q_2} | \leq 4\delta$.
\end{proof}

Next we show that the total length of the segments in $\Gamma$
lying in the exterior of $U_2$ is small.
We use again formulas established by Sylvester~\cite{Sy1890}; see
also~\cite[pp.~32--34]{Sa04} and the second proof for the bound of $2$
in Section~\ref{sec:prelim}. For a planar convex body
$K$, the measure of all lines that meet  $K$ is equal to $\per(K)$. 
In particular, the measure of all lines that meet a segment $s$ is equal to $2|s|$.  
Let now $K_1$, $K_2$ be two disjoint planar convex bodies and let
$L_1$ and $L_2$ be the lengths of the boundaries $\partial K_1$, $\partial K_2$.  
The \emph{external cover} $C_\textup{ext}$ of $K_1$ and $K_2$ is the boundary of 
$\conv(K_1 \cup K_2)$. 
The external cover may be interpreted as a closed elastic string
drawn about $K_1$ and $K_2$. Let $L_\textup{ext}$ denote the length of $C_\textup{ext}$.
The \emph{internal cover} $C_\textup{int}$ of $K_1$ and $K_2$ is the closed curve
realized by a closed elastic string drawn about $K_1$ and $K_2$ and
crossing over at a point between $K_1$ and $K_2$. 
Let $L_\textup{int}$ denote the length of $C_\textup{int}$.
Then, according to~\cite{Sy1890}, the measure of all lines that meet
$K_1$ and $K_2$ is $L_\textup{int} - L_\textup{ext}$. We need a technical lemma:

\begin{lemma} \label{L8}
Let $B$ be a convex body and let $s$ be a segment disjoint from $B$.
Let $\theta$ be the maximum angle of a minimum cone $C$ that contains $B$
and has apex $c$ in $s$, that is,
$\theta = \max_{c\in s} \min_{C\supseteq B} \angle C$.
Then the measure of all lines that meet both $B$ and $s$ is at most
$2\sin\frac{\theta}2 \cdot |s|$.
\end{lemma}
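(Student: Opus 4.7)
The plan is to compute the measure directly using the kinematic density for lines, parameterized via their intersection with $s$. Writing $c(t)$ for the point of $s$ at arclength $t\in[0,|s|]$, and $\alpha\in[0,\pi)$ for the unoriented angle that a line through $c(t)$ makes with the supporting direction of $s$, the standard change of variables from the $(p,\phi)$ representation (signed distance to the origin and normal angle) gives the kinematic density $dG=\sin\alpha\,dt\,d\alpha$. Since $s\cap B=\emptyset$, every line meeting both $B$ and $s$ crosses $s$ in a unique point, so no line is double-counted. Hence
\[
m(\{\ell : \ell\cap B\neq\emptyset,\ \ell\cap s\neq\emptyset\})=\int_0^{|s|}\int_{A(t)}\sin\alpha\,d\alpha\,dt,
\]
where $A(t)\subset[0,\pi)$ is the set of unoriented directions at $c(t)$ whose line meets $B$.

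For each fixed $t$, convexity of $B$ together with $c(t)\notin B$ implies that $A(t)$ is a single arc whose angular length equals $\theta(c(t))$, the aperture of the smallest cone with apex $c(t)$ containing $B$. By the definition of $\theta$ we have $\theta(c(t))\le\theta<\pi$ for every $t$ (strict because $c(t)$ lies outside the convex body $B$).

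The final step is a pointwise trigonometric estimate: if $A\subset[0,\pi)$ is an arc of length $\eta<\pi$, then $\int_A \sin\alpha\,d\alpha\le 2\sin(\eta/2)$, with equality when $A$ is symmetric about $\pi/2$. This follows from the identity $\cos a-\cos(a+\eta)=2\sin(\eta/2)\sin(a+\eta/2)$ and the bound $\sin(a+\eta/2)\le 1$. Applying this with $\eta=\theta(c(t))$ and using that $\sin(\cdot/2)$ is increasing on $[0,\pi]$ gives the inner integral bound $2\sin(\theta/2)$, independent of $t$. Integrating in $t$ yields the claimed $2\sin(\theta/2)\cdot|s|$.

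The main technical point is getting the kinematic density right after reparameterizing; once the formula $dG=\sin\alpha\,dt\,d\alpha$ is in hand, the rest is elementary. I do not anticipate any subtle obstruction; the ``single arc'' claim and the trigonometric estimate are both routine, and the only potential edge case---where $A(t)$ wraps around the circle $[0,\pi)$---produces an even smaller integral, so the bound persists.
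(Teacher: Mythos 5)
Your proof is correct, but it takes a genuinely different route from the paper's. The paper invokes Sylvester's formula $m = L_{\mathrm{int}} - L_{\mathrm{ext}}$ for the measure of lines meeting both bodies, writes this as $\int_s f(\mathrm{d}s, B)$, and computes the density by Taylor-expanding the lengths $|pv|-|pu|$ etc.\ of the infinitesimal string covers, with a two-case split depending on whether both or only one endpoint of $\mathrm{d}s$ lies on $\partial\,\conv(\mathrm{d}s\cup B)$; the resulting density $\cos\angle pwu + \cos\angle qwv$ is then bounded by $2\sin(\angle pwq/2)$ via a sum-to-product identity. You instead bypass Sylvester entirely and use the other standard integral-geometric fact (also in Santal\'o), the kinematic density $dG=\sin\alpha\,dt\,d\alpha$ for lines parameterized by their crossing point with $s$ and incidence angle; then the measure is $\int_0^{|s|}\int_{A(t)}\sin\alpha\,d\alpha\,dt$, and the pointwise bound $\int_A\sin\alpha\,d\alpha\le 2\sin(|A|/2)$ does the rest. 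The two computations are in fact the same at the infinitesimal level --- your inner integral $\cos\alpha_1-\cos\alpha_2$ over the arc $[\alpha_1,\alpha_2]$ is exactly the paper's $\cos\angle pwu+\cos\angle qwv$, and your ``wrap-around'' edge case corresponds to the paper's Case 2 --- but your route is cleaner: no Taylor expansions, no case split (since you observe the wrapping case only lowers the integral, to $2\sin^2(\eta/2)$), and the single-arc fact for a convex $B$ seen from an exterior point is immediate. What the paper's approach buys is that it leans only on the Sylvester formula that is already cited and used elsewhere in the paper, whereas yours imports the $\sin\alpha\,dt\,d\alpha$ change of variables; both are standard.
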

\begin{proof}
By Sylvester's formula, we only need to bound
$f(s,B) = L_{\mathrm{int}}(s,B) - L_{\mathrm{ext}}(s,B)$.
Since $f(s,B) = \int_s f(\mathrm{d}s,B)$,
we consider an arbitrarily short segment $\mathrm{d}s \subset s$.
Let $\gamma$ be the closed curve that is
the boundary of the convex hull of $\mathrm{d}s \cup B$.
Let $p$ and $q$ be the two endpoints of the subcurve of $\gamma$
consisting of points from the boundary of $B$ 
and let $u$ and $v$ be the two endpoints of $\mathrm{d}s$.
Refer to Figure~\ref{fig:measure}.
\begin{figure}[htb]
\centering\includegraphics[scale=0.99]{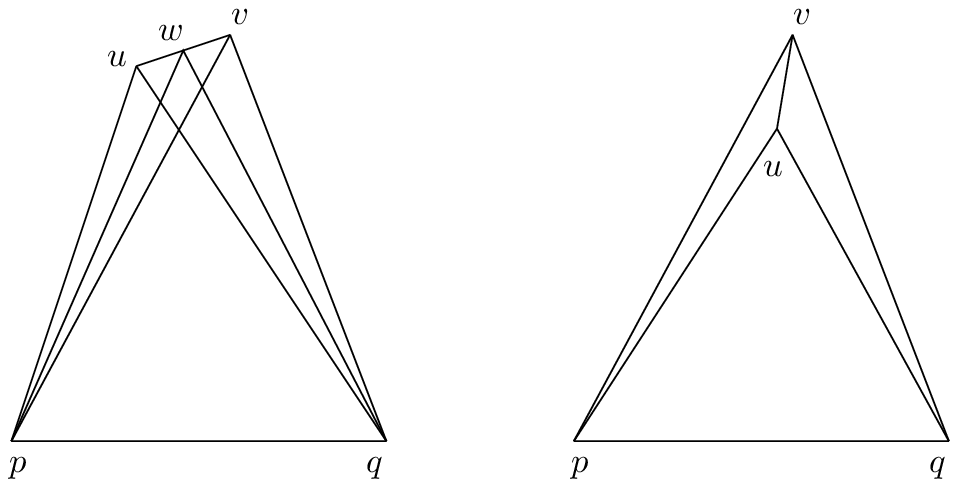}
\caption{Bounding $L_{\mathrm{int}}(s,B) - L_{\mathrm{ext}}(s,B)$.}
\label{fig:measure}
\end{figure}
We distinguish two cases:

\medskip
\emph{Case 1:} Both endpoints $u$ and $v$ are in $\gamma$.
Suppose the four points $p,u,v,q$ are in clockwise order along $\gamma$.
Then
$$
\lim_{|\mathrm{d}s|\to 0}
	\frac{f(\mathrm{d}s,B)}{\mathrm{d}s}
=
\lim_{|uv|\to 0}
	\frac{(|pv| - |pu|) + (|qu| - |qv|)}{|uv|}.
$$

We clearly have $\lim_{|uv|\to 0}  |uv|/|pv|=0$, and the Taylor expansion 
of $(1+x)^{1/2}$ around $0$, $(1+x)^{1/2}=1+x/2 +o(x)$, yields
\begin{align*}
\lim_{|uv|\to 0}
	\frac{|pv| - |pu|}{|uv|}
&=
\lim_{|uv|\to 0}
	\frac{|pv| - (|pv|^2 + |uv|^2 -
          2\cdot|pv|\cdot|uv|\cdot\cos\angle pvu)^{1/2}}{|uv|}\\ 
&=
\lim_{|uv|\to 0}
	\frac{|pv| - |pv|\cdot\big(1 + (|uv|/|pv|)^2 -
          (|uv|/|pv|)\cdot2\cos\angle pvu\big)^{1/2}}{|uv|}\\ 
&=
\lim_{|uv|\to 0}
	\frac{|pv| - |pv|\cdot\big(1 - (|uv|/|pv|)\cdot\cos\angle pvu\big)}{|uv|}\\
&= \cos\angle pvu.
\end{align*}

Symmetrically we have 
$$ \lim_{|uv|\to 0} \frac{|qu| - |qv|}{|uv|}= \cos\angle quv,$$
and thus 

$$
\lim_{|\mathrm{d}s|\to 0}
	\frac{f(\mathrm{d}s,B)}{\mathrm{d}s}
=
\lim_{|uv|\to 0}
	(\cos\angle pvu  + \cos\angle quv)
=
\lim_{|uv|\to 0}
	(\cos\angle pwu  + \cos\angle qwv)
$$
for any interior point $w$ of $\mathrm{d}s$.

Assume without loss of generality that $\angle pwu > \angle qwv$.
Put $\alpha = \frac{\angle pwu + \angle qwv}2$
and $\beta = \frac{\angle pwu - \angle qwv}2$.
Then $\angle pwu = \alpha + \beta$ and $\angle qwv = \alpha - \beta$.
It follows that
\begin{align*}
\cos\angle pwu + \cos\angle qwu
&= \cos(\alpha + \beta) + \cos(\alpha - \beta) \\
&= 2\cos\alpha\cos\beta
\le 2\cos\alpha = 2\cos\frac{\pi - \angle pwq}2 = 2\sin\frac{\angle pwq}2.
\end{align*}
Since $\angle pwq \le \theta$,
we have $\sin\frac{\angle pwq}2 \le \sin\frac{\theta}2$
and consequently
$$
\lim_{|\mathrm{d}s|\to 0}
	\frac{f(\mathrm{d}s,B)}{\mathrm{d}s}
= \lim_{|uv|\to 0} (\cos\angle pwu + \cos\angle qwv)
\le 2\sin\frac{\angle pwq}2
\le 2\sin\frac{\theta}2.
$$

\medskip
\emph{Case 2:} Only one endpoint of $s$, say $v$, is in $\gamma$.
Then
$$
\lim_{|\mathrm{d}s|\to 0}
	\frac{f(\mathrm{d}s,B)}{\mathrm{d}s}
=
\lim_{|uv|\to 0}
	\frac{(|pu| + |uv|  - |pv|) + (|qu| + |uv| - |qv|)}{|uv|}.
$$

The argument is analogous to that in case 1. Since
\begin{align*}
\lim_{|uv|\to 0}
	\frac{|pu| + |uv| - |pv|}{|uv|}
&=
\lim_{|uv|\to 0}
	\frac{(|pv|^2 + |uv|^2 - 2\cdot|pv|\cdot|uv|\cdot\cos\angle
          pvu)^{1/2} + |uv| - |pv|}{|uv|} 
\\
&=
\lim_{|uv|\to 0}
	\frac{|pv|\cdot\big(1 + (|uv|/|pv|)^2 -
          (|uv|/|pv|)\cdot2\cos\angle pvu\big)^{1/2} + |uv| - |pv|}{|uv|} 
\\
&=
\lim_{|uv|\to 0}
	\frac{|pv|\cdot\big(1 - (|uv|/|pv|)\cdot\cos\angle pvu\big) +
          |uv| - |pv|}{|uv|} \\
&= 1 - \cos\angle pvu,
\\
\intertext{and symmetrically}
\lim_{|uv|\to 0}
	\frac{|qu| + |uv| - |qv|}{|uv|}
&= 1 - \cos\angle qvu,
\end{align*}
we have
$$
\lim_{|\mathrm{d}s|\to 0}
	\frac{f(\mathrm{d}s,B)}{\mathrm{d}s}
= 2 - (\cos\angle pvu + \cos\angle qvu).
$$

Assume without loss of generality that $\angle pvu > \angle qvu$.
Put $\alpha = \frac{\angle pvu + \angle qvu}2$
and $\beta = \frac{\angle pvu - \angle qvu}2$.
Then $\angle pvu = \alpha + \beta$ and $\angle qvu = \alpha - \beta$.
It follows that
\begin{align*}
\cos\angle pvu + \cos\angle qvu
&= \cos(\alpha + \beta) + \cos(\alpha - \beta) \\
&= 2\cos\alpha\cos\beta
\ge 2\cos^2\alpha = 2\cos^2\frac{\angle pvq}2.
\end{align*}
Since $\angle pvq \le \theta$,
we have $\sin\frac{\angle pvq}2 \le \sin\frac{\theta}2$
and consequently
$$
\lim_{|\mathrm{d}s|\to 0}
	\frac{f(\mathrm{d}s,B)}{\mathrm{d}s}
= 2 - (\cos\angle pvu + \cos\angle qvu)
\le 2\sin^2\frac{\angle pvq}2
\le 2\sin^2\frac{\theta}2
\le 2\sin\frac{\theta}2.
$$

In summary, in both cases we have
$f(s,B) = \int_s f(\mathrm{d}s, B) \le 2\sin\frac{\theta}2 \cdot |s|$,
and the proof of Lemma~\ref{L8} is complete.
\end{proof}

\begin{corollary}\label{C1}
Consider a segment $s \in \Gamma \cap \overline{U_2}$. 
Then the measure of all lines that meet  $s$ and $U$ is 
at most $(1/4 + 10^{-6}) ^{-1/2} |s| < 2|s|$. 
\end{corollary}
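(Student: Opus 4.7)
The plan is to invoke Lemma~\ref{L8} with $B=U$ and the segment $s\subset\overline{U_2}$; it gives the measure bound $2\sin(\theta/2)\cdot|s|$, where $\theta=\max_{c\in s}\theta_c$ and $\theta_c$ denotes the visual angle subtended by $U$ at the apex $c$ (the aperture of the minimum cone with apex $c$ containing $U$). Everything then reduces to the uniform geometric inequality
$$
\sin(\theta_c/2)\;\leq\;\frac{1}{\sqrt{1+4w_2^{\,2}}}\qquad\text{for every }c\in\overline{U_2},
$$
because then $2/\sqrt{1+4w_2^{\,2}}=(1/4+w_2^{\,2})^{-1/2}=(1/4+10^{-6})^{-1/2}$ (using $w_2=10^{-3}$), and this is strictly less than $(1/4)^{-1/2}=2$.

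I verify the inequality by a short case analysis. By the dihedral symmetry of the concentric pair $(U,U_2)$, I may assume that $c=(c_x,c_y)$ with $c_y\geq 1+w_2$, and then split on $c_x$. In the principal case $0\leq c_x\leq 1$, both tangent lines from $c$ to $U$ touch the top corners $(0,1)$ and $(1,1)$, so setting $h=c_y-1\geq w_2$ one has
$$
\theta_c\;=\;\arctan\!\left(\frac{c_x}{h}\right)+\arctan\!\left(\frac{1-c_x}{h}\right).
$$
As a function of $c_x\in[0,1]$ this is concave (sum of two concave $\arctan$ pieces) and symmetric about $1/2$, so for each fixed $h$ it is maximized at $c_x=1/2$, giving $2\arctan(1/(2h))$; this expression is in turn monotone decreasing in $h$, so the supremum over $h\geq w_2$ is reached at $h=w_2$. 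The resulting sharp bound is $\theta_c\leq 2\arctan(1/(2w_2))$, which is exactly $\sin(\theta_c/2)\leq 1/\sqrt{1+4w_2^{\,2}}$; it is attained at $c=(1/2,1+w_2)$, the midpoint of a side of $U_2$. In the remaining case $c_x\notin[0,1]$, the entire square $U$ is contained in a closed coordinate quadrant with apex at $c$ (since $c_x<0$ or $c_x>1$ together with $c_y>1$ places $U$ strictly on one side of two orthogonal lines through $c$), so both tangent directions from $c$ lie in that quadrant and $\theta_c\leq\pi/2$; since $w_2<1/2$ implies $2\arctan(1/(2w_2))>\pi/2$, the inequality holds a fortiori.

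The main obstacle is identifying the correct extremal configuration $c=(1/2,1+w_2)$ in the case analysis; the principal case is where the bound is tight, while the peripheral cases are disposed of by the quadrant observation. Once the visual-angle inequality is secured for every $c\in\overline{U_2}$, the corollary follows by substituting into Lemma~\ref{L8} and using the algebraic identity $2/\sqrt{1+4w_2^{\,2}}=(1/4+w_2^{\,2})^{-1/2}$ together with $w_2=10^{-3}$.
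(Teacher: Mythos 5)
Your proof is correct and follows essentially the same route as the paper's: invoke Lemma~\ref{L8} with $B=U$, show that the visual angle $\theta_c$ over $c\in\overline{U_2}$ is maximized at the midpoint of a side of $U_2$, and then compute $2\sin(\theta/2)=(1/4+w_2^2)^{-1/2}$. The only difference is that the paper asserts the extremal apex location without proof, whereas you justify it via the concavity/monotonicity analysis and the quadrant observation for the peripheral case.
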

\begin{proof}
Consider the setup in Lemma~\ref{L8}, with $B=U$. 
For $s \subset \overline{U_2}$, $\theta/2 < \pi/2$ is maximized when
the apex of the cone anchored in $s$ is the midpoint of one of the sides of $U_2$.
In this case we have 
$$ \sin\frac{\theta}2 = \frac{1}{2\sqrt{\frac14 + w_2^2}}=
\frac{1}{2\sqrt{\frac14 + 10^{-6}}}. $$
It follows from Lemma~\ref{L8} that the measure of all lines that meet
$s$ and $U$ is at most 
$2\sin\frac{\theta}2 \cdot |s| =(1/4 + 10^{-6}) ^{-1/2} |s|$, as required.  
\end{proof}

\begin{lemma}\label{L9}
The following inequality holds:
\begin{equation} \label{E8}
| \Gamma \cap \overline{U_2} | \leq (5 \cdot 10^5 +2) \, \delta 
=\frac{1}{200} \sin \phi + 2\delta.
\end{equation}
\end{lemma}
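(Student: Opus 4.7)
The plan is to strengthen the Sylvester-based ``second proof'' from Section~\ref{sec:prelim} (which was also used in Lemma~\ref{L7}) by replacing, for segments lying outside $U_2$, the crude bound $m(\G_s)\le 2|s|$ by the sharper bound $m(\G_s)\le c|s|$ coming from Corollary~\ref{C1}, where $c:=(1/4+10^{-6})^{-1/2}<2$. The small gap $2-c$ will absorb the entire length $|\Gamma\cap\overline{U_2}|$ and yield the claimed bound.

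First I would split every segment of $\Gamma$ at its (at most two) intersection points with $\partial U_2$, so that each resulting piece lies entirely in $U_2$ or entirely in $\overline{U_2}$; this preserves $|\Gamma|$, and Corollary~\ref{C1} continues to apply to the outside pieces since the estimate of Lemma~\ref{L8} is an integration along the segment and is therefore additive on subsegments. Write $L_\textup{in}:=|\Gamma\cap U_2|$ and $L_\textup{out}:=|\Gamma\cap\overline{U_2}|$, so $L_\textup{in}+L_\textup{out}=L\le 2+\delta$ by~\eqref{E4}. Because $\Gamma$ blocks every line in $\G$, subadditivity of the Sylvester measure gives
\[
4 \;=\; m(\G) \;\le\; \sum_{s\subset U_2} m(\G_s) \;+\; \sum_{s\subset\overline{U_2}} m(\G_s) \;\le\; 2L_\textup{in}+c\,L_\textup{out},
\]
which after rearrangement yields $(2-c)L_\textup{out}\le 2(L_\textup{in}+L_\textup{out})-4\le 2\delta$.

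It remains to convert this into the claimed numerical bound. Setting $x:=4\cdot 10^{-6}$, the elementary inequality $\sqrt{1+x}\le 1+x/2$ gives, after one algebraic step ($\sqrt{1+x}=(1+x)/\sqrt{1+x}\ge(1+x)/(1+x/2)$), the bound $c=2/\sqrt{1+x}\le (2+x)/(1+x)$, hence $2-c\ge x/(1+x)$. Therefore
\[
L_\textup{out} \;\le\; \frac{2\delta}{2-c} \;\le\; \frac{2\delta(1+x)}{x} \;=\; \frac{2\delta}{x}+2\delta \;=\; 5\cdot 10^{5}\,\delta + 2\delta \;=\; (5\cdot 10^{5}+2)\delta,
\]
which equals $\tfrac{1}{200}\sin\phi+2\delta$ since $\sin\phi=10^{-4}$ and $\delta=10^{-12}$. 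The argument is a direct strengthening of Lemma~\ref{L7}; the only delicate step is this last numerical one, and the elementary inequality $\sqrt{1+x}\le 1+x/2$ delivers the exact constant $5\cdot 10^{5}+2$ with no Taylor remainder estimate needed.
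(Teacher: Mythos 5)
Your proof is correct and follows essentially the same route as the paper: both apply Sylvester's measure formula together with Corollary~\ref{C1} to peel off the gap $2-c$ for segments outside $U_2$, and both arrive at $|\Gamma\cap\overline{U_2}|\le 2\delta/(2-c)$. Your explicit derivation of $2-c\ge x/(1+x)$ via $\sqrt{1+x}\le 1+x/2$ simply makes the final numerical step $2/(2-c)\le 5\cdot 10^5+2$ a bit more transparent than the paper, which states it without elaboration.
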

\begin{proof}
Let $\G$ denote all lines that meet $U$; 
let $\G_2$ denote all lines that meet some segment in $\Gamma \cap U_2$, and 
let $\G_{\overline{2}}$ denote all lines that meet $U$ and some segment in $\Gamma
\cap \overline{U_2}$. 

The measure of all lines that meet $U$ is equal to $m(\G)=\per(U)=4$. 
Since $m()$ is a measure, we have
\begin{align*}
4 &= m(\G) \leq m(\G_2) + m(\G_{\overline{2}}) \leq 
\sum_{s_i \in \Gamma \cap U_2} 2|s_i| + 
\sum_{s_i \in \Gamma \cap \overline{U_2}} (1/4 + 10^{-6}) ^{-1/2} |s_i| \\
&= 2 \sum_{s_i \in \Gamma} |s_i| - 
\sum_{s_i \in \Gamma \cap \overline{U_2}} (2-(1/4 + 10^{-6}) ^{-1/2}) |s_i| \\
&\leq 2(2+\delta) - 
\sum_{s_i \in \Gamma \cap \overline{U_2}} (2-(1/4 + 10^{-6}) ^{-1/2}) |s_i|.
\end{align*}
For the last inequality above we have used Lemma~\ref{L8}.

It follows that
\begin{equation*}
| \Gamma \cap \overline{U_2} | = \sum_{s_i \in \Gamma \cap \overline{U_2}} |s_i| 
\leq \frac{2}{2-(1/4 + 10^{-6}) ^{-1/2}} \, \delta 
\leq (5 \cdot 10^5 +2) \, \delta. 
\tag*{\qedhere}
\end{equation*}
\end{proof}

Refer to  Figure~\ref{f4}. 
Let $\ell'_+$ denote the line incident to the points $(19/20,0)$ and $(1,1/2)$;
let $\ell''_+$ denote the parallel line incident to the point $(1,0)$. 
Let $\Pi_+$ denote the parallel strip bounded by $\ell'_+$ and $\ell''_+$. 
Let $\Pi_-$ denote the parallel strip obtained by reflecting $\Pi_+$
about the horizontal line $y=1/2$. 
Recall that $w_1=1/20$. Observe that the slope of $\ell'_+$ is 
$\frac{1/2}{w_1} = \frac{1}{2w_1} = 10$, 
and let $\alpha$ denote the angle made by $\ell'_+$ with a vertical line.
Observe that $\tan \alpha= 2w_1 = 1/10 \gg \tan \phi \approx 10^{-4}$. 

\begin{figure}[htbp]
\centerline{\epsfxsize=5in \epsffile{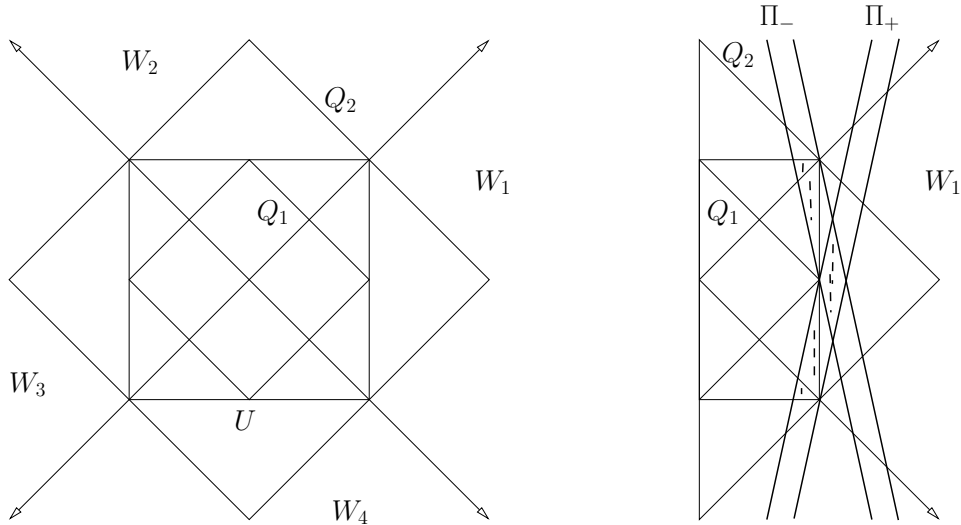}}
\caption{Opaqueness with respect to two parallel strips $\Pi_+$ and $\Pi_-$ 
(bounded by bold lines) covering the right side of $U$.}
\label{f4}
\end{figure}

\begin{lemma}\label{L10}
The following inequalities hold:
\begin{align*}
|X \cap U_\textup{low}| &\geq 0.45. \\
|X \cap U_\textup{high}| &\geq 0.45. \\
|Y \cap U_\textup{left}| &\geq 0.45. \\
|Y \cap U_\textup{right}| &\geq 0.45.
\end{align*}
\end{lemma}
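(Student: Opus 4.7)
By the dihedral symmetries of $U$ (reflections across $x = 1/2$, $y = 1/2$, and the two diagonals), it suffices to prove $|Y \cap U_\textup{right}| \geq 0.45$; the other three inequalities then follow by applying the corresponding reflection, which maps the pair $(\Gamma, U_\textup{right})$ to a pair for which the same argument works with the roles of $X$ and $Y$ (and the appropriate thin rectangle) exchanged. My plan is to combine the opaqueness condition for the two slope-$(\pm 10)$ strips $\Pi_+$, $\Pi_-$ of Figure~\ref{f4} with the global structural bounds from Lemmas~\ref{L3}, \ref{L6} and~\ref{L9}.

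First I would derive a coverage inequality in each strip. Parameterizing the slope-$(\pm 10)$ lines of $\Pi_\pm$ that meet $U$ by their $x$-intercept on the corresponding horizontal edge of $U$ gives an interval of length $w_1 = 1/20$. A segment at angle $\beta$ of length $\ell$ inside $\Pi_\pm$ covers a sub-interval of length $\ell\,|\cos\beta \mp \sin\beta/10|$, which is at most $\ell$ when the segment is almost horizontal and at most $\ell/10$ when it is almost vertical; since $|Z| \le 2\sin\phi$ by Lemma~\ref{L3} and the coverage per unit length of a $Z$-segment is at most $\sqrt{1.01}$, opaqueness yields
\[
|X \cap \Pi_\pm| + \tfrac{1}{10}\,|Y \cap \Pi_\pm| \;\geq\; \tfrac{1}{20} - O(\sin\phi).
\]

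Next I would sum the two inequalities and exploit the geometry of $\Pi_+ \cup \Pi_-$. A direct computation shows $(\Pi_+ \cup \Pi_-) \cap U_2$ lies in the vertical slab $[1 - w_1,\,1 + w_2] \times \mathbb{R}$ (up to a sliver of horizontal width $O(w_2)$ that is absorbed in the $O(\sin\phi)$ terms), so Lemma~\ref{L6} gives $|X \cap (\Pi_+ \cup \Pi_-)| \le w_1 + O(\sin\phi)$. The overlap $\Pi_+ \cap \Pi_-$ is a rhombus whose four vertices are $(1, 1/2)$, $(1.025, 1/4)$, $(1.05, 1/2)$, $(1.025, 3/4)$; all but the first lie outside $U_2$, leaving a tiny triangle of $y$-extent $[1/2 - 1/100,\,1/2 + 1/100]$, on which Lemma~\ref{L6} for $Y$ gives $|Y \cap \Pi_+ \cap \Pi_-| \le 1/50 + O(\sin\phi)$ (the bit outside $U_2$ being absorbed by Lemma~\ref{L9}). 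Substituting into the sum of the two coverage inequalities produces
\[
|Y \cap (\Pi_+ \cup \Pi_-)| \;\geq\; \tfrac{1}{2} - \tfrac{1}{50} - O(\sin\phi) - O(\delta).
\]

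Finally, I would localize the almost-vertical mass to $U_\textup{right}$. Inside $U_2$, the region $(\Pi_+ \cup \Pi_-) \setminus U_\textup{right}$ is contained in the two horizontal slabs $\mathbb{R} \times [-w_2, 0]$ and $\mathbb{R} \times [1, 1 + w_2]$ (since at those $y$-values $\Pi_\pm$ lies in $U_\textup{low} \cup U_\textup{high}$ but has $y \notin [0,1]$); each slab satisfies $|J \cap [0,1]| = 0$, so Lemma~\ref{L6} for $Y$ bounds the total in-$U_2$ leakage by $10\sin\phi$. The out-of-$U_2$ leakage is $O(\delta)$ by Lemma~\ref{L9}. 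Subtracting yields $|Y \cap U_\textup{right}| \geq 1/2 - 1/50 - O(\sin\phi) - O(\delta) \geq 0.45$, since $\sin\phi = 10^{-4}$ and $\delta = 10^{-12}$ make the error terms comfortably smaller than $0.03$. The main obstacle is the summing step: applied to $\Pi_+$ alone, Lemma~\ref{L6} bounds $|X \cap \Pi_+|$ by essentially $w_1$, which exactly saturates the $1/20$ coverage and gives no information on $|Y \cap \Pi_+|$. The crucial observation is that $\Pi_+$ and $\Pi_-$, despite their different slopes, together fit inside one vertical slab of width $w_1$; Lemma~\ref{L6} therefore controls their combined $X$-budget by the same $w_1$, while the summed coverage requirement doubles to $2w_1$, and the excess $w_1$ must be absorbed by $Y$.
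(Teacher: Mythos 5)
Your proof is correct and follows essentially the same line of attack as the paper: the two slope-$\pm10$ strips $\Pi_+,\Pi_-$, the opaqueness condition written for each strip and then summed, with the shared near-horizontal budget controlled via Lemma~\ref{L6} precisely because $\Pi_+ \cup \Pi_-$ fits in a single vertical slab of width $\approx w_1$ (this is the paper's $I=[1-w,1]$ step), followed by a localization to $U_{\textup{right}}$ using Lemmas~\ref{L5}, \ref{L6}, \ref{L9}. The only differences are cosmetic: the paper works with perpendicular projections and partitions the barrier into explicitly disjoint pieces $x_\pm, x_*, y_\pm, y_*$ (avoiding any double counting by restricting $x_\pm$ to $V\cap U_2$), whereas you parameterize by $x$-intercept and, when rewriting $|X\cap\Pi_+|+|X\cap\Pi_-|$ as $|X\cap(\Pi_+\cup\Pi_-)|+|X\cap\Pi_+\cap\Pi_-|$, you should also note explicitly that the overlap term $|X\cap\Pi_+\cap\Pi_-|$ is $O(\sin\phi+\delta)$ (the overlap meets $U_2$ only in $x\in[1,1+w_2]$, so Lemma~\ref{L6} plus Lemma~\ref{L9} apply) — but this is absorbed in your error terms and does not affect the conclusion.
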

\begin{proof}
We prove the 4th inequality; the proofs of the other three
inequalities are analogous. 
Put $w=w_1 +w_2/10$. Observe that the intersection point between
$\ell'_+$ and the lower side of $U_2$ is $(1-w,-w_2)$. 
Observe that the sets $X \cap V \cap U_2 \cap \Pi_+$, 
$X \cap V \cap U_2 \cap \Pi_-$, and $X \cap \overline{V}$ are pairwise
disjoint because the regions $V \cap \Pi_+$,  $V \cap \Pi_-$, and
$\overline{V}$ are pairwise disjoint. Let 
$x_+ = |X \cap V \cap U_2 \cap \Pi_+|$, 
$x_- = |X \cap V \cap U_2 \cap \Pi_-|$, and 
$x_* = |X \cap \overline{V}|$. 
Setting $I=[1-w,1]$ in Lemma~\ref{L6} yields
\begin{equation} \label{E9}
x_+ + x_- \leq w + 5 \sin \phi.
\end{equation}
According to Lemma~\ref{L5}, 
\begin{equation} \label{E10}
x_* \leq \frac92 \sin \phi.
\end{equation}

By~\eqref{E9} and~\eqref{E10} we further have
\begin{equation} \label{E11}
x_+ + x_- + 2x_* \leq w+(5+9) \sin \phi = w + 14 \sin \phi.
\end{equation}

Note that $\Pi_+ \cap \Pi_- \subset H$. Observe that the sets 
$Y \cap H \cap U_2 \cap (\Pi_+ \setminus \Pi_-)$,
$Y \cap H \cap U_2 \cap (\Pi_- \setminus \Pi_+)$, 
$Y \cap U_2 \cap (\Pi_+ \cap \Pi_-)$, and
$Y \cap (\overline{H} \cup \overline{U_2})$ are pairwise
disjoint because the regions 
$H \cap U_2 \cap (\Pi_+ \setminus \Pi_-)$,
$H \cap U_2 \cap (\Pi_- \setminus \Pi_+)$, 
$U_2 \cap (\Pi_+ \cap \Pi_-)$, and
$\overline{H} \cup \overline{U_2}$ are pairwise disjoint. Let 
$y_+ = |Y \cap H \cap U_2 \cap (\Pi_+ \setminus \Pi_-)|$,
$y_- = |Y \cap H \cap U_2 \cap (\Pi_- \setminus \Pi_+)|$,
$y_\pm = |Y \cap U_2 \cap H \cap (\Pi_+ \cap \Pi_-)|$, and
$y_* = |Y \cap (\overline{H} \cup \overline{U_2})|$. 
Note also that
\begin{equation} \label{E12}
[H \cap U_2 \cap (\Pi_+ \setminus \Pi_-)] \cup
[H \cap U_2 \cap (\Pi_- \setminus \Pi_+)] \cup 
[U_2 \cap (\Pi_+ \cap \Pi_-)] \subset U_\textup{right}. 
\end{equation}

Let  $J=[\frac12 - \frac{w_2}{2 w_1}, \frac12 + \frac{w_2}{2 w_1}]$.
Note that $\Pi_+ \cap \Pi_- \cap U_2$ forms an isosceles triangle with
longer vertical side $[1+ w_2,1+w_2] \times J$. 
Setting $J=[\frac12 - \frac{w_2}{2 w_1}, \frac12 + \frac{w_2}{2 w_1}]$ 
in Lemma~\ref{L6} yields
\begin{equation} \label{E13}
y_\pm \leq |J| + 5 \sin \phi = \frac{w_2}{w_1} + 5 \sin \phi 
= \frac{1}{50} + 5 \sin \phi. 
\end{equation}
By Lemma~\ref{L5} and Lemma~\ref{L9} we have
\begin{equation} \label{E14}
y_* \leq \frac92 \sin \phi +  \left(\frac{1}{200} \sin \phi + 2 \delta \right)
\leq 5 \sin \phi.
\end{equation}

The opaqueness condition for the lines in $\Pi_+$ implies that
$$ (x_+ + x_*) \cos(\alpha-\phi) + (y_+ + y_\pm + y_*) \cos(\pi/2 -\alpha-\phi) + |Z| 
\geq w_1 \cos \alpha. $$
Similarly, the opaqueness condition for the lines in $\Pi_-$ implies that
$$ (x_- + x_*) \cos(\alpha-\phi) + (y_- + y_\pm + y_*) \cos(\pi/2 -\alpha-\phi) + |Z| 
\geq w_1 \cos \alpha. $$
Adding these two inequalities yields
$$ (x_+ + x_- + 2x_*) \cos(\alpha-\phi) 
+ (y_+ + y_- + 2y_\pm + 2y_*) \sin(\alpha+\phi) + 2|Z| 
\geq 2 w_1 \cos \alpha. $$ 
After dividing by $\sin (\alpha + \phi)$ we get
$$ \frac{\cos(\alpha-\phi)}{\sin(\alpha+\phi)} \, (x_+ + x_- + 2x_*) 
+ (y_+ + y_- + 2y_\pm + 2y_*) 
+ \frac{2|Z|}{\sin(\alpha+\phi)} \geq 
\frac{\sin \alpha}{\sin (\alpha + \phi)}, \textup{ or }$$
\begin{equation} \label{E15}
y_+ + y_- + y_\pm \geq \frac{\sin \alpha}{\sin (\alpha + \phi)} 
- \frac{\cos(\alpha-\phi)}{\sin(\alpha+\phi)} \, (x_+ + x_- + 2x_*)
- \frac{2|Z|}{\sin(\alpha+\phi)}  -y_\pm -2y_*. 
\end{equation}
Observe that the following rough ideal approximations 
\begin{align*}
\frac{\sin \alpha}{\sin (\alpha + \phi)} &\approx 1, \ \ 
\frac{\cos(\alpha-\phi)}{\sin(\alpha+\phi)} \approx \frac{1}{\tan \alpha} = \frac{1}{2w_1}, \ \ 
x_+ + x_- + 2x_* \lessapprox w, \\
\frac{2|Z|}{\sin(\alpha+\phi)} &\approx 0, \ \ 
y_\pm \lessapprox \frac{w_2}{w_1}, \ \ 
y_* \approx 0,
\end{align*}
would yield 
\begin{align*}
y_+ + y_- + y_\pm &\gtrapprox 1- \frac{w}{2w_1} -\frac{w_2}{w_1} 
= 1- \frac{w_1 + w_2/10}{2w_1} -\frac{w_2}{w_1} \\
&= 1- \frac12 - \frac{21 w_2}{20 w_1} 
= \frac12 -\frac{21 \cdot 20}{20 \cdot 1000}
= \frac{479}{1000}. 
\end{align*}

In reality we have the slightly weaker bounds:
\begin{align*}
\frac{\sin \alpha}{\sin (\alpha + \phi)} &\geq 0.999, \ \ 
\frac{\cos(\alpha-\phi)}{\sin(\alpha+\phi)} \, (x_+ + x_- + 2x_*) 
\leq 10 \left(w + 14 \sin \phi \right), \\
\frac{2|Z|}{\sin(\alpha+\phi)}  &\leq 40 \sin \phi, \ \ 
y_\pm \leq \frac{w_2}{w_1} + 5 \sin \phi, \ \ 
y_* \leq 5 \sin \phi + 2 \delta.
\end{align*}

So instead, by taking into account these bounds, 
inequality~\eqref{E15} implies that:
\begin{equation} \label{E16}
y_+ + y_- + y_\pm 
\geq \frac{478}{1000} - 140 \sin \phi - 40 \sin \phi  - 15 \sin \phi -4\delta 
\geq \frac{45}{100}. 
\end{equation}

Taking into account~\eqref{E12}, the 4th inequality in the lemma follows:  
$$ |Y \cap U_\textup{right}| 
\geq y_+ + y_- + y_\pm \geq 0.45. $$
The proof of Lemma~\ref{L10} is now complete.
\end{proof}

\paragraph{Last step in the proof.} We describe a procedure {\sc ADVANCE} 
for finding a line that is witness to the invalidity of
$\Gamma$ with respect to $U$. For convenience we refer to such an event as
terminating the procedure with \emph{success}. According to our assumption for
$\Gamma$ being a barrier, if this happens, it will be an obvious contradiction.
The analysis of {\sc ADVANCE} employs a potential argument ultimately
based on the inequalities established in Lemma~\ref{L10}.  
The fact that $\Gamma$ lies is $U_3$ is key to the analysis of {\sc ADVANCE}, 
and thus to our proof. 

The initial position of the sweep-line $\ell$ is the vertical line $x=w_1$. 
While $\ell$ is infinite, it is convenient to view it as anchored at its
intersection points with the two horizontal sides of $U_3$:
$a_\textup{low}$ on the lower side and $a_\textup{high}$ on the higher
side; the two anchor points change as $\ell$ changes its position.  
The line $\ell$ moves right across the central part of $U$ (resp., $U_2$), 
in the sense that its anchor points are stationary or move to
the right on the corresponding sides of $U_3$, as follows. See fig.~\ref{f5}. 
\begin{enumerate}
\item If $\ell$ intersects segments in $X \cap U_\textup{high}$, 
  then $\ell$ rotates clockwise around  $a_\textup{low}$ until this
  condition fails.  
If $\ell$ intersects segments in $X \cap U_\textup{low}$,
  then  $\ell$ rotates counterclockwise around  $a_\textup{high}$ 
  until this condition fails. 
(If $\ell$ does not intersect segments in $X \cap U_\textup{low}$ or
$X \cap U_\textup{high}$, rule 2 applies.)
\item If $\ell$ intersects other segments of $\Gamma$, 
then $\ell$ moves right remaining parallel
  to itself until this condition fails. The two anchor points
  $a_\textup{low}$ and $a_\textup{high}$ move right by the same amount
  on the corresponding sides of $U_3$.
\end{enumerate}
\begin{figure}[htbp]
\centerline{\epsfxsize=3.7in \epsffile{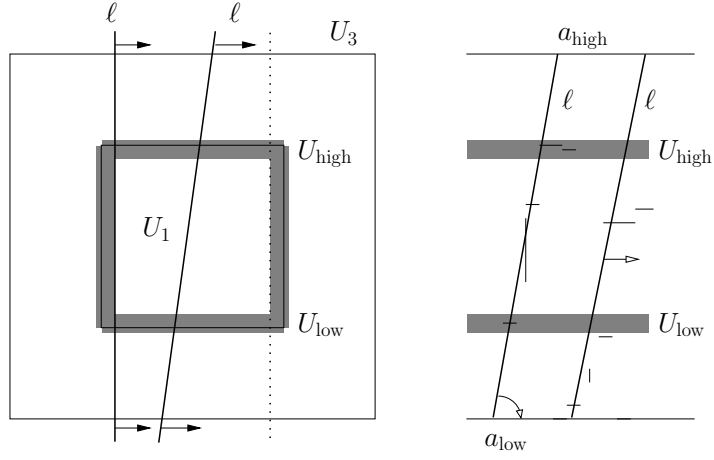}}
\caption{Left: the sweep-line in procedure {\sc ADVANCE} moving right across the
  central part of $U$ (the central subrectangles of 
$U_\textup{low},U_\textup{high} \subset U_2 \setminus U_1$).
Right: two cases (rotation and translation) of charges with the sweep-line.}
\label{f5}
\end{figure}

We next show that {\sc ADVANCE} achieves success before any of its
two anchor points reaches the vertical line $x=1-w_1$ (supporting
the left side of $U_\textup{right}$).  

\begin{lemma}\label{L11} The following properties hold:
\begin{itemize} \itemsep 1pt

\item [{\rm (i)}] During the execution of {\sc ADVANCE}, the slope of $\ell$ 
in absolute value is at least 
$$\tan \beta = \frac{3/2 -w_1 - x_1 \sin \phi}{x_1 \cos \phi} \geq 2.635, $$
where $x_1=|X| -0.45$. 

\item [{\rm (ii)}] The total advance of the higher anchor point caused
by rotations of $\ell$ $($sweeping over segments in $X \cap U_\textup{high}$$)$ 
around the lower anchor point is at most $x_3 = \frac{2}{\tan \beta} \leq 0.76$. 

\item [{\rm (iii)}] The total advance of an anchor anchor point caused
by translations of $\ell$ over segments in $X$ is at most
$$ \frac{\sin (\beta+\phi)}{\sin \beta} x_4 \leq 1.01 x_4 , $$
where $x_4 = |X | - |X \cap U_\textup{low}| - |X \cap U_\textup{high}|$. 

\item [{\rm (iv)}] The total advance of an anchor anchor point caused
by translations of $\ell$ over segments in $Y$ is at most
$$\frac{\cos (\beta -\phi)}{\sin \beta} y_1 \leq 0.38 y_1, $$
where $y_1 = |Y| -  |Y \cap U_\textup{left}| - |Y \cap U_\textup{right}|$. 

\item [{\rm (v)}] The total advance of an anchor anchor point caused
by translations of $\ell$ over segments in $Z$ is at most
$$ \frac{|Z|}{\sin \beta} \leq 1.1 |Z|. $$
\end{itemize}
\end{lemma}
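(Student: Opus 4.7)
The plan is to analyze the five parts of the lemma by tracking how $\ell$ evolves during the execution of {\sc ADVANCE}. A key preliminary is that each barrier segment is swept by $\ell$ at most once: under any of the three move types (clockwise rotation around $a_\textup{low}$, counterclockwise rotation around $a_\textup{high}$, or horizontal translation), the $x$-coordinate of $\ell$ at any fixed height $y$ weakly increases, so a segment once behind $\ell$ stays behind throughout the remainder of the execution.

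For parts (i) and (ii), write $u - v = 2\tan\theta$ where $\theta$ is the angle of $\ell$ from vertical, so (i) is equivalent to $|\tan\theta| \le \cot\beta$. For each clockwise-swept segment $s_j \in X \cap U_\textup{high}$, let $\theta_j^\pm$ be the angles from $a_\textup{low}$ to its two endpoints at the moment of sweeping. Using the tangent subtraction formula together with the facts that every point of $s_j$ lies at vertical distance $\ge 3/2 - w_1$ from $a_\textup{low}$ and that the total vertical extent of all such segments is at most $x_1 \sin\phi$, a direct computation gives $\sum_j (\tan\theta_j^+ - \tan\theta_j^-) \le x_1 \cos\phi / (3/2 - w_1 - x_1 \sin\phi) = \cot\beta$, where the sum is over all clockwise-swept segments (total length at most $x_1 = |X| - 0.45$ by Lemma~\ref{L10}). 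Within each clockwise rotation interval, $\theta$ traverses some $[\theta_a, \theta_b]$ while $\ell$ continuously intersects some segment in $X \cap U_\textup{high}$, so the swept angular extents cover $[\theta_a, \theta_b]$, and by monotonicity of $\tan$ we have $\tan\theta_b - \tan\theta_a \le \sum_{j \text{ swept}} (\tan\theta_j^+ - \tan\theta_j^-)$. Summing over all clockwise intervals yields $R_L = \sum_i 2(\tan\theta_{b,i} - \tan\theta_{a,i}) \le 2\cot\beta = x_3$, proving (ii). Part (i) then follows because $\tan\theta(t)$ equals the net of cumulative clockwise-interval increments (bounded by $\cot\beta$) and counterclockwise-interval decrements (bounded by $\cot\beta$ by a symmetric argument on $X \cap U_\textup{low}$), so $|\tan\theta(t)| \le \cot\beta$ at every time $t$.

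For parts (iii)--(v), a horizontal translation by $\Delta$ displaces $\ell$ perpendicular to itself by $\Delta \sin\beta_\textup{cur}$, where $\beta_\textup{cur} \ge \beta$ is the current angle of $\ell$ from horizontal (by (i)). Sweeping past a segment of length $\ell_j$ at angle $\phi_j$ requires $\Delta \sin\beta_\textup{cur} \ge \ell_j |\sin(\beta_\textup{cur} - \phi_j)|$. Optimizing the right-hand side over $\beta_\textup{cur} \in [\beta, \pi/2]$ and the allowed tilts yields the per-segment bounds $\ell_j \sin(\beta+\phi)/\sin\beta$ for almost horizontal $X$ segments, $\ell_j \cos(\beta-\phi)/\sin\beta$ for almost vertical $Y$ segments, and $\ell_j / \sin\beta$ in the worst case for $Z$ segments. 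Summing over the segments encountered during the central sweep (those in $X$ outside $U_\textup{low} \cup U_\textup{high}$ of total length at most $x_4$, those in $Y$ outside $U_\textup{left} \cup U_\textup{right}$ of total length at most $y_1$, and all of $Z$) gives (iii)--(v).

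The main obstacle is the per-segment tangent-extent bound in part (i): the careful tangent-subtraction computation must correctly account for the small tilts of segments in order to produce the denominator $3/2 - w_1 - x_1 \sin\phi$ after aggregation across possibly many clockwise rotation phases interspersed with translations and counterclockwise rotations.
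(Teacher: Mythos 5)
Your overall plan mirrors the paper's: a monotonicity observation (each segment is swept at most once), a bound on the cumulative clockwise rotation via segments in $X\cap U_\textup{high}$ to obtain (i) and (ii), and a per-segment perpendicular-displacement bound for translations in (iii)--(v). Parts (iii)--(v) are essentially identical to the paper's argument (the paper derives the factors via the law of sines as in its Eq.~(17) and (20), which is the same calculation as your optimization of $\sin(\beta_{\mathrm{cur}}-\phi_j)/\sin\beta_{\mathrm{cur}}$).

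For (i)--(ii), however, there is a genuine gap, and you flag it yourself. Your claimed bound $\sum_j(\tan\theta_j^+ - \tan\theta_j^-) \le x_1\cos\phi/(3/2-w_1-x_1\sin\phi)$ does \emph{not} follow from the two facts you cite (points at vertical distance $\ge 3/2-w_1$ from $a_\textup{low}$, total vertical extent $\le x_1\sin\phi$). Working out $d(\tan\theta)/d\ell$ for a sweep of a segment at angle $\phi_j$ from horizontal, whose left endpoint sits at horizontal offset $h_0$ and vertical offset $v_0$ from the current anchor, gives
$d(\tan\theta)/d\ell = (v_0\cos\phi_j - h_0\sin\phi_j)/v^2$,
and the term $-h_0\sin\phi_j$ (which can be positive when $\phi_j<0$) is controlled not by the vertical extent of the segments but by a bound on the horizontal offset $h_0$; that offset bound is exactly what part (i) provides. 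So the per-segment route is circular as stated: one needs $h_0 \le v_0\cot\beta$ to get the claimed denominator, and that inequality is part (i). A bootstrapping or fixed-point argument could break the circle, but you do not supply it. The paper avoids this entirely: it replaces the per-segment sum by a single global geometric worst case -- concatenate all of $X\cap U_\textup{high}$ into one segment $x_1$ tilted at $\phi$, anchored at vertical distance $3/2-w_1$, and read off the hypotenuse slope via the law of sines (Eqs.~(17)--(19)). That extremal configuration directly yields both the slope bound in (i) and, since the vertical extent of $U_3$ is $2$, the advance bound $x_3 = 2/\tan\beta$ in (ii), without ever needing an a priori bound on $h_0$. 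You should either adopt the paper's concatenation argument or carry out the bootstrap explicitly; as written, the key inequality of (i) is asserted rather than proved.
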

\begin{proof}
For simplicity, throughout this proof, we denote a segment and its
length by the same letter when there is no danger of confusion. 

(i) Observe that the slope of $\ell$ can change only during rotation. 
Each rotation is attributed to some segment in $X \cap U_\textup{high}$
or to some segment in $X \cap U_\textup{low}$. We make the argument
for the first case; during rotation the anchor point is fixed on the
lower side of $U_3$. Since the initial position of $\ell$ is vertical,
the slope of $\ell$ cannot exceed the slope of the hypotenuse of the
right triangle in Figure~\ref{f6}, where all segments in $X \cap U_\textup{high}$ 
have been concatenated and made collinear in the segment $x_1$
which makes an angle of $\phi$ with a horizontal line. We now
determine the slope of the hypotenuse. 
\begin{figure}[htbp]
\centerline{\epsfxsize=3.5in \epsffile{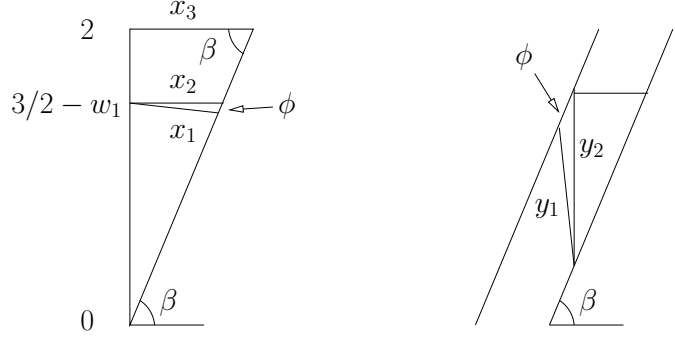}}
\caption{Left: bounding the advance of the higher anchor point caused by
  rotations over segments in $X \cap U_\textup{high}$. 
Right: bounding the advance of both anchor points caused by translations
over segments in $Y$.}
\label{f6}
\end{figure}
Let $x_2$ be the horizontal segment incident to the higher endpoint of
$x_1$.   
By~Lemma~\ref{L4}, $x_1 \leq 1 + 1.5 \sin \phi -0.45 = 0.55 + \sin \phi$. 
By the law of sines in the small triangle with sides $x_1$ and $x_2$, we have 
\begin{equation} \label{E17}
\frac{x_1}{\sin \beta} = \frac{x_2}{\sin (\beta+\phi)}
\ \ \ \Rightarrow \ \ \ 
x_2 = \left( \cos \phi + \frac{\sin \phi}{\tan \beta} \right)\, x_1. 
\end{equation}
We also have 
\begin{equation} \label{E18}
\tan \beta = \frac{3/2-w_1}{x_2} 
\ \ \  \Rightarrow \ \ \ 
x_2 = \frac{3/2-w_1}{\tan \beta}. 
\end{equation}
Putting~\eqref{E17} and~\eqref{E18} together yields
\begin{equation} \label{E19}
\tan \beta = \frac{3/2 -w_1 - x_1 \sin \phi}{x_1 \cos \phi} \geq
\frac{1.5 -w_1}{(0.55 + 1.5 \sin \phi) \cos \phi} - \tan \phi 
\geq 2.635. 
\end{equation}

\medskip
(ii)
Refer to Figure~\ref{f6}~(left).
Clearly, the total advance of the higher anchor point
is at most $ x_3 = \frac{2}{\tan \beta} \leq 0.76$.

\medskip
(iii) As in (i),  the maximum advance is achieved when the slope of
$\ell$ is the smallest in absolute value and all segments in $X$ make
an angle of $\phi$ clockwise below the horizontal line. 
The total length of segments in $X$ contributing to translations of
$\ell$ is clearly bounded from above by 
$x_4 = |X|- |X \cap U_\textup{low}| - |X \cap U_\textup{high}|$.
As in~\eqref{E17}, the total advance of each anchor point is at most  
$\frac{\sin (\beta+\phi)}{\sin \beta} \, x_4 \le 1.01 x_4$. 

\medskip
(iv) Refer to Figure~\ref{f6}~(right). The maximum advance is
achieved when the slope of $\ell$ is the smallest in absolute value. 
We can assume that all segments swept over are collinear in a segment 
$y_1$ that makes an angle of $\phi$ with the vertical direction, as
shown in the figure; here $y_2$ is a vertical segment sharing an
endpoint with $y_1$. 
By the law of sines in the small triangle with sides $y_1$ and $y_2$, we have   
\begin{equation} \label{E20}
\frac{y_1}{\sin (\pi/2 -\beta)} = \frac{y_2}{\sin (\phi+ \pi/2 -\beta)}
\ \ \Rightarrow \ \ 
y_2 = \frac{\cos (\beta -\phi)}{\sin \beta} y_1 \leq 0.38 y_1, 
\end{equation}
as required. 

\medskip
(v) Similarly with (iv), we deduce that the advance is at most
\begin{equation} \label{E21}
\frac{|Z|}{\sin \beta} = \sqrt{1 + \frac{1}{\tan^2 \beta}} \, |Z| 
\leq 1.1 \, |Z|, 
\end{equation}
as required. 

\medskip
The proof of Lemma~\ref{L11} is now complete.
\end{proof}

To finish the proof of Theorem~\ref{T1}, we next bound from above the
total advance of each anchor point.  Note that 
$$ x_4 = |X|- |X \cap U_\textup{low}| - |X \cap U_\textup{high}| 
\leq \left( 1+ \frac32 \sin \phi \right) -0.45 - 0.45
\leq 0.1 + \frac32 \sin \phi, $$ 
and similarly
$$ y_1 = |Y| - |Y \cap U_\textup{left}| - |Y \cap U_\textup{right}| \leq
\left( 1+ \frac32 \sin \phi \right) -0.45 - 0.45 \leq 0.1 + \frac32 \sin \phi. $$
By Lemma~\ref{L11}~(ii), the total advance of an anchor point due to
rotations of $\ell$ is at most $0.76$.  

Therefore, taking into account the inequalities in Lemma~\ref{L11},
the total advance of an anchor point is at most 
$$ 0.76 + 1.01 x_4 +  0.38 y_1 + 1.1 |Z| 
\leq  0.76 + 0.1013 + 0.0381 + 2.2 \sin \phi \leq 0.8997 < 0.9, $$
\ie, strictly smaller than the horizontal distance of $1-2w_1=0.9$ between 
the right side of $U_\textup{left}$ and the left side of $U_\textup{right}$. 

Consequently, the execution of procedure {\sc ADVANCE} terminates with
success and this concludes the proof of Theorem~\ref{T1}.
\qed

\paragraph{Remark.} The reader may wonder where the assumption 
$\Gamma \subset U_3$ was needed. If $\Gamma$ is not confined to $U_3$,
since $Q_2 \subset U_3$, 
we know by Lemma~\ref{L7} that the total length of its segments located in the
exterior of $U_3$ is small. However, these segments can pose
difficulty in the analysis of {\sc ADVANCE} because they could be
swept by the sweep-line multiple times, backward (in the ``wrong''
direction) during rotation, and then forward (in the ``correct''
direction) during translation, and then again backward and forward, etc.

\section{A sharper bound for interior barriers by linear programming} 
\label{sec:T2}

In this section we prove Theorem~\ref{T2}, 
namely that the length of any interior barrier for
the unit square is at least $2 + 10^{-5}$.
Let $w$ be a small number to be determined, $0 < w < 1/2$.
Put $\psi = \arctan 2w$.
Let $\phi$ be a small angle to be determined, $0 < \phi < \psi$.
(We will set $w = 0.1793$ and $\phi = 1.5589^\circ$.)
We say that a segment $s$ is \emph{near horizontal}
(resp.~\emph{near vertical})
if the angle between the segment and the $x$-axis
(resp.~$y$-axis) is at most $\phi$.
Refer to Figure~\ref{fig:square}.

Divide the unit square $U = [0,1]^2$ into $13$ convex sub-regions 
(one octagon, eight triangles and four quadrilaterals) by $8$ segments,
each cutting off a right triangle with two shorter sides of lengths 
$w$ and $1/2$. 
The height of each triangle to its hypotenuse is
$h = \frac{w/2}{\sqrt{w^2 + 1/4}} = \frac{1}{\sqrt{4 + w^{-2}}}$.
This partition of $U$ is suggested by our earlier Lemma~\ref{L10}.
\begin{figure}[htbp]
\centering\includegraphics{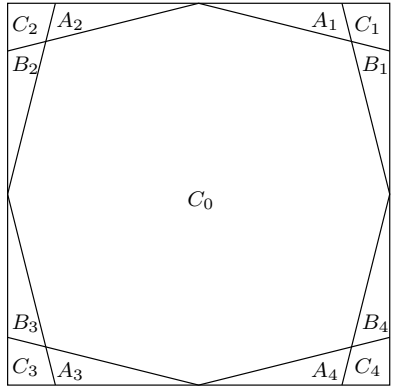}
\caption{Partition the unit square $U$ into $13$ parts.}
\label{fig:square}
\end{figure}

Let $\Gamma$ be an interior segment barrier for $U$.
Let $X$ (resp.~$Y$)
be the subset of $\Gamma$ consisting of near horizontal
(resp.~near vertical) segments.
Let $Z = \Gamma \setminus (X \cup Y)$.
Partition each of $X$, $Y$, and $Z$ further into $13$ subsets
consisting of segments within the $13$ sub-regions, respectively.
We thereby obtain a partition of $\Gamma$ into $39$ subsets.
In the following, we construct a linear program with $39$ variables,
one variable for the total length of segments in each subset,
and with the goal of minimizing the sum of the $39$ variables.

\subsection{Linear constraints based on opaque conditions of projections}

For each segment $s$ in $\Gamma$,
denote by $\alpha_s$ the smallest angle of rotation
that brings $s$ to either horizontal or vertical,
and
denote by $\beta_s$ the smallest angle between $s$ and a diagonal of $U$.
Then,
$0 \le \alpha_s \le \pi/4$,
$0 \le \beta_s \le \pi/4$,
and
$\alpha_s + \beta_s = \pi/4$.
Denote by $|s|_x$ (resp.~$|s|_y$) the length of projection of $s$ to a
horizontal (resp.~vertical) side of $U$.
Let $|s|_{xy} = |s|_x + |s|_y$.
Denote by $|s|_{zz}$ the total length of projection of $s$
to the two diagonals of $U$.
Clearly,
\begin{align*}
|s|_{xy} &= |s|\cdot (\cos\alpha_s + \sin\alpha_s)
	= |s|\cdot \sqrt2\cos\beta_s
\\
|s|_{zz} &= |s|\cdot (\cos\beta_s + \sin\beta_s)
	= |s|\cdot \sqrt2\cos\alpha_s.
\end{align*}

The total length of a horizontal side and a vertical side of $U$ is $2$.
The opaque conditions in the horizontal direction and the vertical direction
require that
\begin{equation}\label{eq:sides}
\sum_{s\in X\cup Y} |s|_{xy} + \sum_{s\in Z} |s|_{xy} \ge 2.
\end{equation}

The total length of the two diagonals of $U$ is $2\sqrt2$.
The opaque conditions in the two directions perpendicular to the two diagonals
require that
\begin{equation}\label{eq:diagonals}
\sum_{s\in X\cup Y} |s|_{zz} + \sum_{s\in Z} |s|_{zz} \ge 2\sqrt2.
\end{equation}

For each segment $s$ in $X$ or $Y$, we have
$\alpha_s \in [0, \phi]$
and
$\beta_s \in [\frac{\pi}4 - \phi, \frac{\pi}4]$.
Thus
\begin{align*}
|s| &\le |s|_{xy} \le |s|\cdot\sqrt2\cos\left( \frac{\pi}4 - \phi \right),
\\
|s|\cdot\sqrt2\cos\phi &\le |s|_{zz} \le |s|\cdot\sqrt2.
\\
\intertext{\indent For each segment $s$ in $Z$, we have
$\alpha_s \in (\phi, \frac{\pi}4]$
and
$\beta_s \in [0, \frac{\pi}4 - \phi)$.
Thus}
|s|\cdot\sqrt2\cos\left( \frac{\pi}4 - \phi \right)
	&< |s|_{xy} \le |s|\cdot\sqrt2,
\\
|s| &\le |s|_{zz} < |s|\cdot\sqrt2\cos\phi.
\end{align*}

Using the upper bounds in the inequalities above,
it follows from~\eqref{eq:sides} and~\eqref{eq:diagonals} that
\begin{equation}\label{eq:xy}
|X \cup Y|\cdot\sqrt2\cos\left( \frac{\pi}4 - \phi \right)
	+ |Z|\cdot\sqrt2 \ge 2,
\end{equation}
\begin{equation}\label{eq:zz}
|X \cup Y|\cdot\sqrt2
+ |Z|\cdot\sqrt2\cos\phi \ge 2\sqrt2.
\end{equation}

\paragraph{More projections to the sides.}
For each subset $S$ of $U$, let $\overline{S}$ denote $U \setminus S$.

\old{
The opaque conditions in the directions perpendicular to the $x$-axis
and the $y$-axis, respectively, require that
\begin{equation}\label{eq:x}
|X| + |Y|\cdot\sin\phi + |Z|\cdot\cos\phi \ge 1,
\end{equation}
\begin{equation}\label{eq:y}
|Y| + |X|\cdot\sin\phi + |Z|\cdot\cos\phi \ge 1.
\end{equation}

For $S = C_0 \cup A_1 \cup B_1 \cup C_1 \cup A_4 \cup B_4 \cup C_4$
and $S = C_0 \cup A_2 \cup B_2 \cup C_2 \cup A_3 \cup B_3 \cup C_3$,
\begin{equation}\label{eq:x+ii}
  |X \cap S|
+ |Y \cap S|\cdot\sin\phi
+ |Z \cap S|\cdot\cos\phi
\ge \frac12.
\end{equation}

For $S = C_0 \cup A_1 \cup B_1 \cup C_1 \cup A_2 \cup B_2 \cup C_2$
and $S = C_0 \cup A_3 \cup B_3 \cup C_3 \cup A_4 \cup B_4 \cup C_4$,
\begin{equation}\label{eq:y+ii}
  |Y \cap S|
+ |X \cap S|\cdot\sin\phi
+ |Z \cap S|\cdot\cos\phi
\ge \frac12.
\end{equation}
}

For $S = C_0 \cup A_1 \cup A_2 \cup A_3 \cup A_4$,
\begin{equation}\label{eq:x+a}
  |X \cap S|
+ |Y \cap S|\cdot\sin\phi
+ |Z \cap S|\cdot\cos\phi
\ge 1 - 2w.
\end{equation}

For $S = C_0 \cup B_1 \cup B_2 \cup B_3 \cup B_4$,
\begin{equation}\label{eq:y+b}
  |Y \cap S|
+ |X \cap S|\cdot\sin\phi
+ |Z \cap S|\cdot\cos\phi
\ge 1 - 2w.
\end{equation}

For $S = C_0 \cup A_1 \cup A_4$ and $S = C_0 \cup A_2 \cup A_3$,
\begin{equation}\label{eq:x+aii}
  |X \cap S|
+ |Y \cap S|\cdot\sin\phi
+ |Z \cap S|\cdot\cos\phi
\ge \frac12 - w.
\end{equation}

For $S = C_0 \cup B_1 \cup B_2$ and $C_0 \cup B_3 \cup B_4$,
\begin{equation}\label{eq:y+bii}
  |Y \cap S|
+ |X \cap S|\cdot\sin\phi
+ |Z \cap S|\cdot\cos\phi
\ge \frac12 - w.
\end{equation}

For $S = \overline{B_1 \cup C_1 \cup B_4 \cup C_4}$
and $S = \overline{B_2 \cup C_2 \cup B_3 \cup C_3}$,
\begin{equation}\label{eq:x-bcii}
  |X \cap S|
+ |Y \cap S|\cdot\sin\phi
+ |Z \cap S|\cdot\cos\phi
\ge 1 - w.
\end{equation}

For $S = \overline{A_1 \cup C_1 \cup A_2 \cup C_2}$
and $S = \overline{A_3 \cup C_3 \cup A_4 \cup C_4}$,
\begin{equation}\label{eq:y-acii}
  |Y \cap S|
+ |X \cap S|\cdot\sin\phi
+ |Z \cap S|\cdot\cos\phi
\ge 1 - w.
\end{equation}

\paragraph{More projections to the diagonals.}
For $S = C_0 \cup A_i \cup B_i \cup C_i$, $1 \le i \le 4$,
\begin{equation}\label{eq:z+i}
|(X \cup Y) \cap S|\cdot\cos\left( \frac{\pi}4 - \phi \right)
	+ |Z \cap S|
\ge \frac14\sqrt2.
\end{equation}

For $S = \overline{A_i \cup B_i \cup C_i}$, $1 \le i \le 4$,
\begin{equation}\label{eq:z-i}
|(X \cup Y) \cap S|\cdot\cos\left( \frac{\pi}4 - \phi \right)
	+ |Z \cap S|
\ge \frac34\sqrt2.
\end{equation}

\old{
For $S = C_0 \cup A_1 \cup B_1 \cup C_1 \cup A_3 \cup B_3 \cup C_3$
and $S = C_0 \cup A_2 \cup B_2 \cup C_2 \cup A_4 \cup B_4 \cup C_4$,
\begin{equation}\label{eq:z+ii}
|(X \cup Y) \cap S|\cdot\cos\left( \frac{\pi}4 - \phi \right)
	+ |Z \cap S|
\ge \frac12\sqrt2.
\end{equation}
}

\paragraph{Projections along the hypotenuses.}
For $S = B_i \cup C_i$, $1 \le i \le 4$,
\begin{equation}\label{eq:cb+i}
  |X \cap S|\cdot\cos(\psi - \phi)
+ |Y \cap S|\cdot\sin(\psi + \phi)
+ |Z \cap S|
\ge h.
\end{equation}

For $S = A_i \cup C_i$, $1 \le i \le 4$,
\begin{equation}\label{eq:ca+i}
  |Y \cap S|\cdot\cos(\psi - \phi)
+ |X \cap S|\cdot\sin(\psi + \phi)
+ |Z \cap S|
\ge h.
\end{equation}

\old{
The distance between any two parallel hypotenuses of right triangles is
$d = h/w$.

For $S = \overline{B_1 \cup C_1 \cup B_3 \cup C_3}$
and $S = \overline{B_2 \cup C_2 \cup B_4 \cup C_4}$,
\begin{equation}\label{eq:cb-ii}
  |X \cap S|\cdot\cos(\psi - \phi)
+ |Y \cap S|\cdot\sin(\psi + \phi)
+ |Z \cap S|
\ge d.
\end{equation}

For $S = \overline{A_1 \cup C_1 \cup A_3 \cup C_3}$
and $S = \overline{A_2 \cup C_2 \cup A_4 \cup C_4}$,
\begin{equation}\label{eq:ca-ii}
  |Y \cap S|\cdot\cos(\psi - \phi)
+ |X \cap S|\cdot\sin(\psi + \phi)
+ |Z \cap S|
\ge d.
\end{equation}
}

\subsection{Linear constraints based on the ADVANCE procedure}

Now consider the ADVANCE procedure with
$a_{\mathrm{high}}$ and $a_{\mathrm{low}}$
on the upper and lower sides of $U$, respectively,
with $x$ coordinates between $w$ and $1 - w$.
Put $\beta = \arctan\frac1{1-2w}$.
Recall Lemma~\ref{L11}.
By a similar analysis as in items (i) and (ii) of Lemma~\ref{L11},
each segment in $X \cap (A_1 \cup A_2)$
causes a rotation that moves $a_{\mathrm{high}}$
for a distance at most its length times
the factor $\frac{\sin(\beta+\phi)}{\sin\beta} \cdot \frac{1}{1-w}$,
and each segment in $X \cap (A_3 \cup A_4)$
causes an analogous movement of $a_{\mathrm{low}}$.
Also, as in (iii), (iv), and (v) of Lemma~\ref{L11},
each segment in $X \cap C_0$,
each segment in $Y \cap (C_0 \cup A_1 \cup A_2 \cup A_3 \cup A_4)$, and
each segment in $Z \cap (C_0 \cup A_1 \cup A_2 \cup A_3 \cup A_4)$,
respectively,
causes a translation that moves both $a_{\mathrm{high}}$ and $a_{\mathrm{low}}$
for a distance at most its length times
$\frac{\sin(\beta+\phi)}{\sin\beta}$,
$\frac{\cos(\beta-\phi)}{\sin\beta}$,
and
$\frac1{\sin\beta}$.

If the two maximum movements
\begin{multline*}
|X \cap (A_1 \cup A_2)|
	\cdot\frac{\sin(\beta+\phi)}{\sin\beta} \cdot \frac{1}{1-w}
+ |X \cap C_0|
	\cdot\frac{\sin(\beta+\phi)}{\sin\beta}
\\
+ |Y \cap (C_0 \cup A_1 \cup A_2 \cup A_3 \cup A_4)|
	\cdot\frac{\cos(\beta-\phi)}{\sin\beta}
+ |Z \cap (C_0 \cup A_1 \cup A_2 \cup A_3 \cup A_4)|
	\cdot\frac1{\sin\beta}
\end{multline*}
and
\begin{multline*}
|X \cap (A_3 \cup A_4)|
	\cdot\frac{\sin(\beta+\phi)}{\sin\beta} \cdot \frac{1}{1-w}
+ |X \cap C_0|
	\cdot\frac{\sin(\beta+\phi)}{\sin\beta}
\\
+ |Y \cap (C_0 \cup A_1 \cup A_2 \cup A_3 \cup A_4)|
	\cdot\frac{\cos(\beta-\phi)}{\sin\beta}
+ |Z \cap (C_0 \cup A_1 \cup A_2 \cup A_3 \cup A_4)|
	\cdot\frac1{\sin\beta}
\end{multline*}
were both less than $1 - 2w$,
then the ADVANCE procedure would find a line that is not blocked.
Without loss of generality, assume that
\begin{equation}\label{eq:xa1234}
|X \cap (A_1 \cup A_2)|
\ge |X \cap (A_3 \cup A_4)|.
\end{equation}
Then we must have
\begin{align} \label{eq:advancex}
|X \cap (A_1 \cup A_2)|
	\cdot\frac{\sin(\beta+\phi)}{\sin\beta} \cdot \frac{1}{1-w}
&+ |X \cap C_0|
	\cdot\frac{\sin(\beta+\phi)}{\sin\beta} \nonumber \\
&+ |Y \cap (C_0 \cup A_1 \cup A_2 \cup A_3 \cup A_4)|
	\cdot\frac{\cos(\beta-\phi)}{\sin\beta} \nonumber \\
&+ |Z \cap (C_0 \cup A_1 \cup A_2 \cup A_3 \cup A_4)|
	\cdot\frac1{\sin\beta} \ge 1 - 2w.
\end{align}

Similarly, assume without loss of generality that
\begin{equation}\label{eq:yb1423}
|Y \cap (B_1 \cup B_4)|
\ge |Y \cap (B_2 \cup B_3)|.
\end{equation}
Then we must have
\begin{align}\label{eq:advancey}
|Y \cap (B_1 \cup B_4)|
	\cdot\frac{\sin(\beta+\phi)}{\sin\beta} \cdot \frac{1}{1-w}
&+ |Y \cap C_0|
	\cdot\frac{\sin(\beta+\phi)}{\sin\beta}  \nonumber \\
&+ |X \cap (C_0 \cup B_1 \cup B_2 \cup B_3 \cup B_4)|
	\cdot\frac{\cos(\beta-\phi)}{\sin\beta} \nonumber \\
&+ |Z \cap (C_0 \cup B_1 \cup B_2 \cup B_3 \cup B_4)|
	\cdot\frac1{\sin\beta} \ge 1 - 2w.
\end{align}

\subsection{The linear program}

We construct a linear program with $32$ linear constraints
corresponding to inequalities~\eqref{eq:xy} through~\eqref{eq:advancey},
and $39$ additional non-negativity constraints for the $39$ variables.
For the LP solver, we wrote a C program that uses \verb|glp_exact|
of the GNU Linear Programming Kit (GLPK 4.52)
compiled with the GNU Multiple Precision Arithmetic Library (GMP 5.1.2).
With parameters $w = 0.1793$ and $\phi = 1.5589^\circ$,
the resulting linear program yields a lower bound of
$2.0000113\ldots > 2 + 10^{-5}$.
The corresponding values of the $39$ variables output by our program
are the following:

\begin{verbatim}
XA1 0.2762651  XA2 0.0726680  XA3 0.1076756  XA4 0.0419541
XB1 0.0000000  XB2 0.0227020  XB3 0.0000000  XB4 0.0000000
XC1 0.1177023  XC2 0.0292085  XC3 0.1469319  XC4 0.0481085  XC0 0.1096004
YA1 0.0000000  YA2 0.0000000  YA3 0.0000000  YA4 0.0911907
YB1 0.1297475  YB2 0.1903349  YB3 0.0000000  YB4 0.2869624
YC1 0.0271035  YC2 0.1387073  YC3 0.0803509  YC4 0.0520305  YC0 0.0000000
ZA1 0.0000000  ZA2 0.0000000  ZA3 0.0000000  ZA4 0.0000000
ZB1 0.0000000  ZB2 0.0000000  ZB3 0.0000000  ZB4 0.0000000
ZC1 0.0000000  ZC2 0.0000000  ZC3 0.0000000  ZC4 0.0000000  ZC0 0.0307674
\end{verbatim}

\section{Conclusion} \label{sec:conclusion}

We have seen that while it is fairly routine to show 
a lower bound of $2$ for the length of an arbitrary barrier for the 
unit square, going beyond this bound poses significant difficulties. 
Here we proved that any segment barrier for the unit square that lies
in a concentric homothetic square of side length $2$ has length 
at least $2+10^{-12}$. In particular, this bound holds for the length
of any interior barrier for the unit square. 

A result of a similar nature from the literature that comes to our
mind is the following.  Let $G$ be an embedded planar graph whose edges are curves.
The \emph{detour} between two points $p$ and $q$ (on edges or vertices)
of~$G$ is the ratio between the length of a shortest path connecting~$p$
and~$q$ in~$G$ and their Euclidean distance~$|pq|$.
The maximum detour over all pairs of points is called the
\emph{geometric dilation} $\delta(G)$; we refer the interested reader
to~\cite{DEG+06,EGK06} for details. 
Ebbers-Baumann, Gr{\"u}ne and Klein~\cite{EGK06} have shown that
every finite point set is contained in a planar graph whose
geometric dilation is at most $1.678$, and some point sets require
graphs with dilation $\delta\ge \pi/2 =1.5707\ldots$.
While obtaining the lower bound  of $\pi/2$ is not extremely difficult, 
it requires nontrivial ideas and it takes a substantial effort to
raise this lower bound to $(1+ 10^{-11})\pi/2$; see~\cite{DEG+06}.  

We conclude with some interesting conjectures and questions on opaques barriers 
that are left open. 
\begin{conjecture} \label{conj:int}
An optimal barrier for the square is interior. 
\end{conjecture}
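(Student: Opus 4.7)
The plan is to prove Conjecture~\ref{conj:int} by a local exchange argument. Given any optimal barrier $\Gamma^*$ that has some portion outside $U$, I would try to exhibit a modification $\Gamma'$ contained in $\overline{U}$ with $\len(\Gamma') \leq \len(\Gamma^*)$. By Lemma~\ref{L1} we may assume $\Gamma^*$ is a segment barrier, and by splitting each segment along $\partial U$ it suffices to treat one maximal exterior subsegment $s$ at a time: the lines that $s$ blocks and that also meet $U$ form a pencil-like family of chords of $U$, and we must replace $s$ by a curve in $\overline{U}$ meeting every such chord.

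The key technical step is a length comparison between $s$ and an explicit interior replacement. Letting $p,q$ be the endpoints of $s$, the extreme blocked chords $c_p, c_q$ of $U$ cut off two arcs of $\partial U$; either arc is a candidate for replacement, as is the pair of sub-chords of $c_p$ and $c_q$ running from $s$ to $U$. Sylvester's formula, exploited in Lemma~\ref{L8} and Corollary~\ref{C1}, supplies the key intuition here: the measure of lines meeting both $s$ and $U$ is strictly less than $2|s|$ whenever $s$ lies outside $U$, whereas an interior replacement of length $\ell$ contributes exactly $2\ell$ to the corresponding line measure. Thus, in an averaged sense, exterior segments are strictly less efficient than interior ones, and one would hope to convert this measure savings into a pointwise valid interior replacement of total length at most $|s|$.

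The main obstacle is exactly this conversion: opacity is a pointwise condition on every line, while Sylvester's inequality is integrated over a measure. An exterior segment $s$ could in principle block a small-measure but geometrically awkward family of chords whose shortest interior transversal exceeds $|s|$; this seems especially delicate when $s$ lies near a corner of $U$, where the blocked pencil spans a wide angular range. A promising alternative is a variational attack: show that the first variation of length under an infinitesimal normal push of $s$ toward $\partial U$ is strictly negative, while opacity is preserved in every direction. Controlling this constraint globally --- and in particular ruling out the corner-adjacent configurations above, as well as handling interactions among many exterior pieces that might ``cooperate'' to block a family of chords no single interior curve can block cheaply --- appears to be the real difficulty, and is presumably why the conjecture remains open.
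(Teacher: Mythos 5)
The statement you were asked to prove is labeled as a \emph{conjecture} in the paper; the authors explicitly leave it open, remarking only that if it were confirmed then Theorem~\ref{T1} would yield an unconditional lower bound for arbitrary barriers. There is therefore no proof in the paper to compare against, and your submission --- which candidly describes itself as a plan with an unresolved gap rather than a proof --- is an honest reflection of the state of the art. To be clear for the record: you have not proved the conjecture, and you say as much.

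That said, your diagnosis of the difficulty is essentially correct and worth amplifying. The tension you identify --- Sylvester's formula and Lemma~\ref{L8} give an \emph{integrated} (measure-theoretic) efficiency loss for exterior segments, while opacity is a \emph{pointwise} requirement on every line --- is exactly why the naive exchange fails. One further point sharpens your ``this seems especially delicate'' remarks into a hard obstruction: a purely local, shape-agnostic exchange argument of the type you outline cannot possibly work, because the paper itself notes that for the disk the current best barrier is \emph{exterior} (\cite{FM86,FMP84}), and the authors conjecture that the optimal exterior barrier for the square has length $3 > \sqrt2 + \sqrt6/2$. If ``replace any exterior subsegment by an interior curve of no greater length blocking the same pencil'' were true for general convex bodies, it would contradict the (widely believed) optimality of exterior constructions for the disk. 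Any successful attack on Conjecture~\ref{conj:int} therefore has to exploit something specific to the square --- presumably its corners and the rigidity of its support function --- rather than the general convexity facts underlying Lemma~\ref{L8} and Corollary~\ref{C1}. The corner-adjacent configurations and the ``cooperation'' among several exterior pieces that you flag are precisely where a square-specific argument would have to do real work, and so far no one has found one.
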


If Conjecture~\ref{conj:int} were confirmed, Theorem~\ref{T1} would
give a nontrivial lower bound on the length of an arbitrary barrier
for the unit square. At the moment we have such a non-trivial lower bound
only under a suitable locality condition, in particular for interior barriers. 

\begin{itemize} \itemsep 1pt
\item [(1)] Is it possible to adapt the procedure {\sc ADVANCE}, or
  its analysis, in order to deduce a similar lower bound for arbitrary
  (unrestricted) barriers for the unit square?
\end{itemize}

We believe that the leftmost barrier in Figure~\ref{f1} is an optimal 
exterior barrier for the square. 
\begin{conjecture} \label{conj:ext}
The length of an optimal exterior barrier for the unit square is $3$.
\end{conjecture}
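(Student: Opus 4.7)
The statement is a conjecture rather than a theorem; the upper bound $|\Gamma|\le 3$ is immediate, since three sides of $U$ form a barrier (every line through $U$ meets $\partial U$ in two distinct points except on a measure-zero set, and so must meet at least one of the three chosen sides). The plan is therefore to establish the matching lower bound $|\Gamma|\ge 3$ for every exterior barrier $\Gamma$.

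The approach I would take is to extend the LP framework of Section~\ref{sec:T2} to the exterior setting. First, partition the closure of the exterior of $U$ into $12$ natural regions: the $4$ sides of $\partial U$, the $4$ infinite half-strips extending outward from each side, and the $4$ open corner quadrants. Segments of $\Gamma$ are then classified jointly by region and by direction class (near-horizontal, near-vertical, diagonal). The opaqueness conditions in the four coordinate directions and the two diagonals carry over verbatim and furnish the standard constraints. The exterior restriction adds a crucial new family of constraints: for each $x\in(0,1)$ the vertical line through $(x,0)$ must meet $\Gamma$ at a point with $y\notin(0,1)$, so the blocking mass for that line must come from the top side, the bottom side, the upper half-strip, or the lower half-strip; analogous conditions hold for horizontal and slanted sweeps. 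Additional constraints can be extracted via Lemma~\ref{L8} and Corollary~\ref{C1}, which penalize blocking segments placed far from $U$: a segment in an open corner quadrant blocks lines through $U$ much less efficiently per unit length than one on $\partial U$. Finally, a bundle analysis, partitioning lines through $U$ by the pair of sides they cross, should yield lower bounds on the barrier mass dedicated to each of the six line bundles, and an ADVANCE-style sweep redesigned for the exterior geometry should convert these bounds into witness lines whenever $|\Gamma|<3$, in parallel to Sections~\ref{sec:T1} and~\ref{sec:T2}.

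The main obstacle is that the gap to be closed is a \emph{multiplicative} factor of $3/2$ above the trivial bound $2$, not a $1+\eps$ improvement: projection-based methods alone cannot breach this ceiling, since the Cauchy-type averaging used in Lemma~\ref{L2} yields exactly the perimeter-halving bound of $2$, achieved in the limit by a near-orthogonal cross barrier that is feasible as an interior configuration but not as an exterior one. Any route from $2$ to $3$ must therefore exploit some global feature of exterior barriers that is invisible to direction-by-direction averaging. A promising candidate is a connectivity/topological argument on the structure of $\Gamma$ relative to the $4$ sides of $U$: one would hope to show that $\Gamma$ must contribute significant mass to each of the $4$ outward half-strips in a way that cannot be freely redistributed between strips. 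An attractive alternative is a deformation argument showing that any exterior barrier admits a length-non-increasing deformation supported on $\partial U$; after such a reduction only unions of sides of $U$ remain as candidates, and three is then the minimum viable configuration. Finding either the right topological invariant or the right length-preserving deformation scheme appears to me to be the true crux of the conjecture.
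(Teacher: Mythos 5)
There is nothing in the paper to compare your proposal against: Conjecture~\ref{conj:ext} is stated as an open problem, and the authors supply no proof of either direction beyond offering the belief that the three-side barrier is optimal. You correctly flag this, verify the easy upper bound (three sides of $\partial U$ do form an exterior barrier, since a generic line through $U$ crosses two distinct sides and hence meets one of the three retained sides), and then sketch a research program for the matching lower bound rather than a proof.

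Your diagnosis of the difficulty is essentially right, and it is worth stating why your proposed machinery cannot close the gap as sketched. Every lower-bound device in the paper --- the width-averaging of Lemma~\ref{L2}, Sylvester's integral-geometric identity together with Lemma~\ref{L8} and Corollary~\ref{C1}, the structural Lemmas~\ref{L3}--\ref{L10}, the LP of Section~\ref{sec:T2}, and the {\sc ADVANCE} sweep --- is calibrated to extract a small \emph{additive} surplus over the baseline of $2$, on the order of $10^{-12}$ or $10^{-5}$. The LP constraints you propose (regional splits, direction classes, half-strip exterior constraints, Sylvester penalties for far-away segments) are all projection- or measure-type constraints of the same species, and there is no reason to expect their linear relaxation to certify a bound anywhere near $3$: the near-orthogonal cross of total length $\approx 2$ satisfies all direction-averaged constraints, and pushing its pieces to $\partial U$ does not increase length by $50\%$. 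So the exterior restriction must be leveraged through something qualitatively different. Of your two candidate escape routes, the deformation-to-boundary idea has a concrete gap you should not gloss over: it is not known that an optimal exterior barrier can be retracted onto $\partial U$ without increasing length, and indeed for the disk the best known barrier is exterior but \emph{not} contained in the boundary circle, which is a warning sign that such a retraction principle may simply be false for the square as well. The topological/bundle idea is less developed and would need a precise invariant that distinguishes exterior from interior configurations; none is proposed. In short, your write-up is an honest and sensible discussion of why the conjecture is hard, not a proof, and the paper has no proof for you to have missed.
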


It might be interesting to note that the current best barrier for the
disk is exterior, see~\cite{FM86,FMP84}. This suggests three more
questions to include (variant (i) of (2) below is from~\cite{DJP12}).

\begin{itemize} \itemsep 1pt
\item [(2)] Can one give a characterization of the class of convex
polygons whose optimal barriers are 
(i)~interior?
(ii)~exterior?
(iii)~neither interior nor exterior?
\end{itemize}

\end{document}